\newcommand{\R}{\mathbb{R}}
\newcommand{\C}{\mathbb{C}}
\newcommand{\N}{\mathbb{N}}
\newcommand{\Dcal}{\mathcal{D}}
\newcommand{\Fcal}{\mathcal{F}}
\newcommand{\Ccal}{\mathit{C}}
\newcommand{\Cinf}{C^\infty}
\newcommand{\Scal}{\mathcal{S}}
\newcommand{\sob}{\mathsf{H}}
\newcommand{\del}{\partial}
\newcommand{\im}{\mathrm{i}}
\newcommand{\Bcal}{\mathcal{B}}
\newcommand{\Ecal}{\mathcal{E}}
\newcommand{\Bfrak}{\mathfrak{B}}
\renewcommand{\bar}{\overline}
\renewcommand{\Im}{\mathrm{Im}\,}
\DeclareMathOperator{\supp}{supp}
\DeclareMathOperator{\ran}{ran}
\DeclareMathOperator{\Tr}{Tr}
\DeclareMathOperator{\sgn}{sgn}
\newtheorem{lemma}{Lemma}
\newtheorem{theorem}{Theorem}
\newtheorem{corollary}{Corollary}
\author{Alessandro Pietro Contini}
\title{The Weyl law for the Dirichlet Laplacian}
\begin{document}
	\maketitle
	\tableofcontents
	
	\section*{Abstract}
	The purpose of this paper is to review the asymptotic distribution of eigenvalues of the Dirichlet Laplacian. We introduce and recall all the relevant spectral quantities and provide a proof based on the Fourier Tauberian Theorem.

	\section{Introduction}
	Determining the asymptotics distribution of the spectrum of operators is one of the longest-living subjects of intense research in analysis and mathematical physics. In its over 100 years long history (see the last Section for a partial historical account), many results have been proven or improved upon thanks to the introduction of more and more refined techniques, including but not limited to Dirichlet-Neumann bracketing, heat kernel analysis and Fourier Integral Operators. In this paper we aim to give an introduction to the modern proof of the Weyl law with remainder estimates, using the wave group approach on a compact Riemannian manifold with boundary. The rough sketch of the idea can be subsumed as follows:
	\begin{enumerate}
		\item One has a positive, scalar, second-order operator $P$ on the compact Riemannian manifold with boundary $\bar{M}$, acting first on $\Cinf_c(M)$. The extension of $P$ under Dirichlet boundary conditions, $\mathcal{P}$, has discrete spectrum, so the counting problem is well-defined.
		\item The counting function $N(\lambda)$ can be computed as the trace of the spectral family $E_\lambda$ of $\mathcal{P}$, that is, as the integral over $M$ of its distributional kernel $e(x,y,\lambda)$. Thus, estimating $e$ leads to explicit bounds on $N$.
		\item $e$ can be related via Fourier transform to $\cos(t\sqrt{\mathcal{P}})$, which is an inverse to the wave operator with Cauchy data $u|_{t=0}\in L^2$, $\dot u|_{t=0}=0$.
		\item The cosine group above can be approximated by the Hadamard parametrix construction, and the error is a smooth function. Furthermore, the first term in the approximation is (essentially) the Fourier transform of the surface measure of a sphere of radius $|t|$.
		\item Thus we have, near the diagonal of $M\times M$, a bound on the Fourier transform of the difference between the spectral function and the surface measure above. If we stay at distance $1/\sqrt{\lambda}$ from the boundary (which can be achieved using a localising function in Fourier space), the Fourier Tauberian Theorem allows to deduce a bound on $e$ in terms of the Hadamard coefficients.
		\item Near the boundary, a direct estimate of $e$ provides the required asymptotic behaviour.		
	\end{enumerate}
	For the sake of clarity and to avoid discouraging a non-specialist reader, we focus on the Laplace operator $P=\Delta$ and dedicate extensive space to the problem of constructing the Dirichlet extension on the Sobolev spaces of extendible distributions. 
	
	Thus, Section 2 is an account of the theory of Sobolev spaces on manifolds with boundary. We caution the reader that we use the \textit{complex interpolation method} as a ``black box'', since we believe that discussing it wouldn't contribute to the understanding of the topic. We refer the interested reader to \cite{taylor2023PDE1}, to which the whole Section is inspired, for a complete discussion. The most important result for what follows is the Poincaré inequality (Theorem \ref{theo:poincare inequality}), which is the main reason why the spectrum of the Dirichlet extension is discrete. A reader already familiar with such objects can safely skip this, keeping in mind that every occurrence of Sobolev spaces in the rest of the paper is to be taken as consisting of extendible distributions. 
	
	Section 3 deals with the problem of extending the Laplacian from $\Cinf_c(M)$ to the correct subspace of $L^2(M)$ in order to encode the homogeneous Dirichlet boundary condition (inhomogeneous boundary conditions can be handled by the theory of boundary layer potentials, see \cite{mclean_strongly_2000}). In particular, we prove that, when acting on extendible Sobolev distributions, the Dirichlet Laplacian $\Delta_D$ has discrete spectrum, so that the counting problem is well-defined. We structure the discussion after \cite{taylor2023PDE1} and \cite{borthwick_2020_spectral_theory}.
	
	Section 4 introduces the spectral function, namely the kernel of the spectral family of the Dirichlet Laplacian. There, we prove some useful estimates and analyse its cosine transform following \cite{hormander1994analysispseudodifferential}. The main result here are the estimates of Theorem \ref{theo:direct estimate of spectral function}.
	
	In Section 5, we recall the properties of Riesz distributions and show how to use them to solve the wave equation for $\Delta_D$. Together with the analysis of the cosine transform of the preceding Section, this gives explicit controls on the Fourier transform of the spectral measure. The structure of the discussion in this Section, as well as in the following one, is taken from \cite{hormander1994analysispseudodifferential}. The interested reader is invited to consult \cite{baer_wave_lorentzian_2007} for a more thorough discussion with geometric applications.
	
	Section 6 wraps up the discussion with the use of a Tauberian theorem to derive bounds for the spectral function from the control on its Fourier transform derived in Sections 4 and 5. This gives the local Weyl law of Theorem \ref{theo:local weyl law} and its global version of Corollary \ref{cor:weyl asymptotics}. We refer the reader to \cite{safarov_fourier_tauberian_2001} for an explicit approach.
	
	Finally, in Section 7 we give an historical account of the most important stepping stones in spectral asymptotics and provide to the interested reader the references to further work. Of course, we make no claim of completeness and refer to the cited literature for further work.
	
	The author would like to thank Alexander Strohmaier and Nikolas Eptaminitakis for the many discussions on this topic and the organisers of the workshop ``Analytical methods in interplay with physics'' at the University of Potsdam for their hospitality.

	\section{Sobolev spaces}\label{sect:sobolev spaces}

	\subsection*{Sobolev spaces on $\R^n$ and on closed manifolds}
	We recall the classical distribution and Sobolev spaces in order to fix our notation and conventions. Let $\Scal(\R^n)$ be the space of Schwartz functions, that is, $\phi\in\Scal(\R^n)$ if $\phi$ is smooth and rapidly decaying together with all its derivatives. It is a Fréchet space with seminorms
	\[
	p_k(\phi)=\sum_{|\alpha|+|\beta|\leq k}\sup_{x\in\R^n}|x^\alpha \del^\beta \phi(x)|.\] 
	The space of tempered distributions is the dual $\Scal'(\R^n)$ of $\Scal(\R^n)$, namely the space of all continuous functionals $u\colon\Scal(\R^n)\rightarrow \C$ which are continuous with respect to the above Fréchet space topology, namely $u\in \Scal'(\R^n)$ if
	\[|u(\phi)|\leq Cp_k(u)\quad \forall \phi\in\Scal(\R^n).\]
	It is equipped with the weak* topology: a sequence $(u_j)\subset\Scal'(\R^n)$ converges to 0 if and only if $\braket{u_j,\phi}\to 0$ for all $\phi\in\Scal(\R^n)$. 
	
	The space $\Cinf_c(\R^n)$ is the set of all smooth and compactly supported functions on $\R^n$ (we employ the same notation for the corresponding sheaf on a manifold $M$). It can be topologised via seminorms as well, but it is not a Fréchet space. This notwithstanding, it is usually known as the space of \textit{test functions} and its continuous dual $\Dcal'(\R^n)$ is the space of distributions on $\R^n$. Continuity is here taken to mean the following: for any compact set $K$ there are $C>0$ and $N\in\N$ such that for any $\phi\in\Cinf_c(\R^n)$ it holds
	\[
	|u(\phi)|\leq C\sum_{\alpha\leq N}\sup_{x\in K}|\del^\alpha \phi(x)|.
	\]
	
	The Fourier transform $\hat u=\Fcal(u)$ is defined, for $u\in\Scal(\R^n)$, by
	\[
	\hat{u}(\xi)=\Fcal(u)(\xi)=(2\pi)^{-n/2}\int_{\R^n} e^{-\im x\xi}u(x)dx\]
	and is again a Schwartz function. If we denote by $(\cdot,\cdot)$ the $L^2$-scalar product on $\R^n$, the Fourier transform has an adjoint $F^\ast$, given explicitly by
	\[
	\Fcal^\ast(u)(\xi)=(2\pi)^{-n/2}\int_{\R^n}e^{\im x\xi}u(x)dx,\]
	which is also the inverse of $\Fcal$. Thus, $\Fcal$ is an isomorphism on $\Scal(\R^n)$. It extends therefore, by duality, to an isomorphism of $\Scal'(\R^n)$ given by the formula
	\[
	\hat u(\phi)\equiv u(\hat \phi),\quad u\in\Scal'(\R^n),\phi\in\Scal(\R^n).\]
	Moreover, it also extends to an isometry of $L^2(\R^n)$.
	
	The Sobolev spaces $\sob^s(\R^n)$, $s\in\R$, are defined to consist of those $u\in\Scal'(\R^n)$ such that $\braket{\xi}^{-s}\hat{u}(\xi)\in L^2(\R^n)$. Here and throughout the manuscript, $\braket{z}^s=(1+|z|^2)^{-s/2}$ for any vector $z\in\R^n$ and $s\in\R$. If $s=k\in\N$, this definition is equivalent to the classical Sobolev spaces $W^{k,2}(\R^n)$, namely to requiring that $D^{\alpha}u\in L^2(\R^n)$ for all $\alpha\in\N^n$, $|\alpha|\leq k$.
	
	Elements of the spaces $\sob^s(\R^n)$ are not continuous functions in general, but a fundamental result implies that they embed in Hölder spaces $\Ccal^{k,\gamma}$ for high enough order:
	\begin{lemma}[Sobolev embeddings]
		\label{lemma:sobolev embedding}
		If $s=n/2+k+\gamma$ for $k\in\N$ and $\gamma\in(0,1)$, and if $u\in \sob^s(\R^n)$, then the equivalence class of $u$ contains an elements of $\Ccal^{k,\gamma}(\R^n)$. In other words, the identity map extends to a continuous embedding 
		\begin{equation*}
			\sob^{s}(\R^n)\hookrightarrow \Ccal^{k,\gamma}(\R^n).
		\end{equation*}
	\end{lemma}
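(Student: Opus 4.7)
The plan is to represent $u$ and its derivatives through Fourier inversion, and control the resulting integrals by Cauchy--Schwarz against the $\sob^s$-weight $\langle\xi\rangle^s$.

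First, to obtain the continuity of the derivatives of order up to $k$, I would observe that for any $|\alpha|\leq k$ the Fourier transform of $\del^\alpha u$ equals $(\im\xi)^\alpha\hat u(\xi)$. Applying Cauchy--Schwarz to the factorisation $|\xi^\alpha\hat u(\xi)|=\langle\xi\rangle^{-s}|\xi|^{|\alpha|}\cdot\langle\xi\rangle^{s}|\hat u(\xi)|$ yields
\[
\|\xi^\alpha\hat u\|_{L^1(\R^n)}\leq\Bigl(\int_{\R^n}\langle\xi\rangle^{2|\alpha|-2s}\,d\xi\Bigr)^{1/2}\|u\|_{\sob^s},
\]
and the weight integrates because $2(s-|\alpha|)\geq 2(s-k)=n+2\gamma>n$. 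Fourier inversion then exhibits $\del^\alpha u$ a.e.\ as the pointwise absolutely convergent integral $(2\pi)^{-n/2}\int e^{\im x\xi}(\im\xi)^\alpha\hat u(\xi)\,d\xi$, which is bounded and continuous on $\R^n$. This produces a representative of $u$ in $\Ccal^k(\R^n)$.

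For the Hölder estimate on the top-order derivatives I would take $|\alpha|=k$ and expand
\[
\del^\alpha u(x)-\del^\alpha u(y)=(2\pi)^{-n/2}\int_{\R^n}\bigl(e^{\im x\xi}-e^{\im y\xi}\bigr)(\im\xi)^\alpha\hat u(\xi)\,d\xi,
\]
splitting the integration at the frequency threshold $R:=|x-y|^{-1}$. On $\{|\xi|\leq R\}$ I would apply $|e^{\im x\xi}-e^{\im y\xi}|\leq|x-y||\xi|$, while on the complement I would use $|e^{\im x\xi}-e^{\im y\xi}|\leq 2$. A second Cauchy--Schwarz against the $\sob^s$ weight reduces each of the two contributions to a radial integral of $|\xi|^{2(k+j)}\langle\xi\rangle^{-2s}$ with $j\in\{1,0\}$. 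Since $2(s-k)=n+2\gamma$, these scale as $R^{2-2\gamma}$ over the ball and $R^{-2\gamma}$ over the exterior; after the square root and the additional prefactor $|x-y|$ in the low-frequency part, both terms combine to give a bound of order $|x-y|^\gamma\|u\|_{\sob^s}$, uniformly in $x,y$.

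The argument is essentially bookkeeping; the only delicate point is the choice of the splitting radius $R=|x-y|^{-1}$, calibrated precisely so that the low- and high-frequency pieces recombine to the exponent $\gamma$. The weight $\langle\xi\rangle$, rather than $|\xi|$, keeps the low-frequency integrals harmless near the origin, so no separate cut-off there is required, and the two estimates assemble into the announced embedding $\sob^s(\R^n)\hookrightarrow\Ccal^{k,\gamma}(\R^n)$.
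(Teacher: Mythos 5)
The paper does not actually prove Lemma \ref{lemma:sobolev embedding}: it is recalled as a classical fact (with \cite{taylor2023PDE1} as the background reference), so there is no in-paper argument to compare against. Your proof is the standard Fourier-analytic one and it is correct: Cauchy--Schwarz against the weight $\langle\xi\rangle^{s}$ shows $\xi^\alpha\hat u\in L^1$ for $|\alpha|\le k$ because $2(s-|\alpha|)\ge n+2\gamma>n$, giving a bounded continuous representative of each $\del^\alpha u$; and the splitting of the difference $\del^\alpha u(x)-\del^\alpha u(y)$ at the frequency $R=|x-y|^{-1}$, with $|e^{\im x\xi}-e^{\im y\xi}|\le|x-y||\xi|$ on the ball and $\le 2$ outside, produces exactly the exponents $R^{1-\gamma}$ and $R^{-\gamma}$ that recombine to $|x-y|^{\gamma}$, using $2(s-k)=n+2\gamma$ and $0<\gamma<1$ (the latter is what keeps the low-frequency radial integral $\int_0^R r^{1-2\gamma}\,dr$ finite). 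Two routine points you gloss over, neither of which is a gap: one should note that the continuous function given by Fourier inversion has classical derivatives (differentiation under the integral sign, justified by the absolute convergence you established) agreeing with the distributional ones, so that it really is a $\Ccal^{k}$ representative of $u$; and the Hölder bound for $|x-y|\ge 1$ follows directly from the sup-norm estimate rather than from the scaling of the split integrals. With those remarks your argument is complete and would serve as a self-contained proof of the lemma.
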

	
	Let now $\Omega$ be a closed manifold, i.e. $\Omega$ is Hausdorff, second-countable, locally Euclidian, compact and without boundary. We can always choose a Riemannian metric on $\Omega$ and we assume to have done so. We define distributions on $\Omega$ to be elements of the dual space of $\Cinf_c(\Omega)$, as above (continuity is defined the same way, with the derivatives replaced by covariant differentiation with respect to the Levi-Civita connection). Define $\sob^{s}(\Omega)$ to consist of all $u\in \Dcal'(\Omega)$ for which, given a coordinate patch $U\subset \Omega$ with chart $\chi$ and a localising function $\psi\in\Cinf_c(U)$, the map $(\psi u)\circ\chi^{-1}$ is in $\sob^{s}(\chi(U))$. This condition is invariantly defined, as follows for example from the complex interpolation method (see \cite{taylor2023PDE1} for an explanation). Then, the results obtained for $\R^n$ carry over directly to this setting, and even a compactness result holds true.
	\begin{lemma}[Rellich compactness]
		\label{lemma:rellich compactness}
		If $\Omega$ is a closed manifold, $s\in \R$, $\sigma>0$, then the identity map extends to a compact embedding
		\[
		\sob^{s+\sigma}(\Omega)\to\sob^s(\Omega).\]
	\end{lemma}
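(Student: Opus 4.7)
The plan is to reduce to the case of compactly supported distributions on $\R^n$ via a partition of unity, and then to extract a convergent subsequence using Arzelà--Ascoli on the Fourier side combined with a high-frequency truncation argument.

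First, I would cover $\Omega$ by finitely many coordinate patches $(U_i,\chi_i)$ and choose a subordinate partition of unity $\{\psi_i\}$. Given a bounded sequence $(u_j)\subset \sob^{s+\sigma}(\Omega)$, the pushforwards $v_{i,j}:=(\psi_i u_j)\circ\chi_i^{-1}$ form bounded sequences in $\sob^{s+\sigma}(\R^n)$, each supported in a fixed compact set $K_i\Subset\chi_i(U_i)$. If I can extract, for each $i$, a subsequence converging in $\sob^s(\R^n)$, then a diagonal extraction plus recombination yields convergence in $\sob^s(\Omega)$. This step relies only on the coordinate invariance of the Sobolev spaces and the fact that multiplication by $\psi_i\in\Cinf_c$ is bounded on $\sob^{s+\sigma}$, both of which come for free from the intrinsic definition.

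Second, for a bounded sequence $(v_j)\subset\sob^{s+\sigma}(\R^n)$ with support in a fixed compact $K$, I would note that each $\hat v_j$ is smooth (in fact entire). Writing $v_j=\psi v_j$ for a cutoff $\psi\in\Cinf_c(\R^n)$ with $\psi\equiv 1$ near $K$, one has, for any multi-index $\alpha$,
\[
\partial^\alpha\hat v_j(\xi)=(-\im)^{|\alpha|}\widehat{x^\alpha v_j}(\xi)=(2\pi)^{-n/2}(-\im)^{|\alpha|}\braket{x^\alpha v_j,\psi(\cdot)e^{-\im\cdot\xi}}_{\sob^{s+\sigma},\sob^{-s-\sigma}}.
\]
Since $x^\alpha v_j$ is bounded in $\sob^{s+\sigma}$ with support in $K$, and $\|\psi(\cdot)e^{-\im\cdot\xi}\|_{\sob^{-s-\sigma}}$ grows at most polynomially in $\xi$, the family $(\hat v_j)$ is uniformly bounded together with all derivatives on every compact set in $\R^n$. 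By Arzelà--Ascoli applied on each ball $\{|\xi|\leq R_k\}$ with $R_k\to\infty$ and a diagonal argument, there is a subsequence $(\hat v_{j_k})$ converging uniformly on compacta to some continuous $g$.

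Third, I would upgrade this uniform-on-compacta convergence to convergence in $\sob^s$ by splitting
\[
\|v_{j_k}-v_{j_l}\|_{\sob^s}^2=\int_{|\xi|\leq R}\braket{\xi}^{2s}|\hat v_{j_k}-\hat v_{j_l}|^2\,d\xi+\int_{|\xi|>R}\braket{\xi}^{2s}|\hat v_{j_k}-\hat v_{j_l}|^2\,d\xi.
\]
The tail is bounded by $\braket{R}^{-2\sigma}\|v_{j_k}-v_{j_l}\|_{\sob^{s+\sigma}}^2\leq C\braket{R}^{-2\sigma}$ by the uniform $\sob^{s+\sigma}$ bound, while the main term vanishes as $k,l\to\infty$ by uniform convergence of $(\hat v_{j_k})$ on $\{|\xi|\leq R\}$. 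Choosing $R$ large first and then $k,l$ large gives a Cauchy sequence in $\sob^s(\R^n)$, as desired.

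The main obstacle is the equicontinuity step: one must use the compact support of the $v_j$ in a quantitative way to make sense of $\hat v_j(\xi)$ as a Sobolev duality pairing and to control the $\sob^{-s-\sigma}$-norm of $\psi(\cdot)e^{-\im\cdot\xi}$ with explicit polynomial dependence on $\xi$. The remaining high/low-frequency split is a standard Tauberian truncation; the $\sigma>0$ gap is what makes the tail estimate work and is precisely why compactness fails when $\sigma=0$.
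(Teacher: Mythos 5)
Your proof is correct. The paper does not actually prove Lemma \ref{lemma:rellich compactness}: it is stated as a classical fact imported along with the chart-based definition of $\sob^s(\Omega)$, so there is no ``paper proof'' to compare against line by line. The route you take --- localization by a finite partition of unity, Paley--Wiener-type smoothness of $\hat v_j$ for the compactly supported pieces, Arzel\`a--Ascoli on frequency balls plus a diagonal extraction, and the high-frequency tail estimate $\braket{\xi}^{2s}\le\braket{R}^{-2\sigma}\braket{\xi}^{2(s+\sigma)}$ on $|\xi|>R$ --- is the standard argument for Rellich compactness on the full scale $s\in\R$, and all the steps you flag as delicate do hold: multiplication by $x^\alpha\psi\in\Cinf_c$ is bounded on $\sob^{s+\sigma}(\R^n)$, the bound $\|\psi(\cdot)e^{-\im\cdot\xi}\|_{\sob^{-s-\sigma}}\lesssim\braket{\xi}^{|s+\sigma|}$ follows from Peetre's inequality applied to the translated Fourier transform $\hat\psi(\cdot+\xi)$, and equicontinuity on compacta follows from the first-derivative bounds. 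Two minor points worth making explicit if you write this up: in the recombination step, use that each piece $(\psi_iu_j)\circ\chi_i^{-1}$ is supported in a fixed compact subset of $\chi_i(U_i)$, so its $\sob^s(\R^n)$ norm is equivalent to its contribution to the $\sob^s(\Omega)$ norm and the finite sum over $i$ of Cauchy sequences is Cauchy; and for negative or fractional $s$ the boundedness of the chart maps and of multiplication by the cutoffs is exactly what the paper's interpolation-based (coordinate-invariant) definition of $\sob^s(\Omega)$ guarantees, so you are right that it ``comes for free'' but it is the one ingredient you are quoting rather than proving.
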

	
	\subsection*{Sobolev spaces on manifolds with boundary.}
	We discuss now the spaces of distributions on
	\[
	\R^n_+=\{(x_1,x')\in\R^n\colon x_1>0,x'\in\R^{n-1}\}
	\]
	and its closure $\overline{\R^n_+}$, to prepare for the discussion on manifolds with boundary. For integer $k$ we want, as before, 
	\[
	\sob^{k}(\bar{\R}^n_+)=\{u\in L^2(\R^n_+)\colon \forall |\alpha|\leq k \;D^\alpha u\in L^2(\R^n_+)\}.
	\]
	One has that $\overline{\Scal}(\R^n_+)$, the space of restrictions to $\R^n_+$ of elements of $\Scal(\R^n)$, is dense in $\sob^{k}(\R^n_+)$ with respect to its natural Hilbert space topology. This is a consequence of the fact that the operators of translation in the $x_1$ direction are continuous in the $\sob^{k}$ topology. 
	
	One has that a distribution $u\in\sob^{k}(\R^n_+)$ is actually the restriction to $\R^n_+$ of $\tilde{u}\in\sob^{k}(\R^n)$. This follows from the existence of a continuous extension to $\sob^{k}(\R^n_+)$ of the map $E\colon
	{\Scal}(\R^n_+)\to \Scal(\R^n)$ given, for some coefficients $a_j\in\C$, by
	\[
	\tilde{u}(x)=\left\{\begin{aligned}
		&u(x),\quad x_1>0;\\
		&\sum_{j=1}^Na_ju(-jx_1,x'),\quad x_1<0.
	\end{aligned}\right.\]
	That the extension exists follows from an explicit computation of the $a_j$'s as solutions of a Vandermonde system. Such extension is then a right-inverse to the restriction map $\rho\colon\sob^{k}(\R^n)\to\sob^{k}(\R^n_+)$, which is therefore surjective. 
	
	We can now define the general Sobolev spaces $\sob^{s}(\R^n_+)$ as the complex interpolation spaces between $L^2(\R^n_+)$ and $\sob^{k}(\R^n_+)$ for some $k\geq s$. This is independent of the chosen $k$, since the complex interpolation method also provides an extension of $E$ to $\sob^{s}(\R^n_+)$, and gives
	\[
	\sob^{s}(\R^n_+)\cong\sob^{s}(\R^n)\diagup \{u\in \sob^{s}\colon u|_{\R^n_+}=0\}.
	\]
	With the notation we will adopt later, the above Sobolev spaces of \textit{extendible distributions} correspond to $\overline{\sob}_{s}({\R}^n_+)$, while the set in the quotient consists of \textit{supported distributions} and is denoted by $\dot{\sob}_{s}(\overline{\R}^n_-)$.
	
	Let now $\overline{M}$ be a compact smooth manifold with smooth boundary. We can always assume that $\overline{M}$ is a submanifold of a closed manifold of the same dimension, call it $\Omega$, by using a collar neighbourhood of $\del M$. For nonnegative integer $k$ we let $\bar\sob^k (M)$ be the set of all $u\in L^2(M)$ such that $Pu\in L^2(M)$ for any differential operator $P$ of order $k$ or less with coefficients in $\Cinf(\overline{M})$. Then, $\Cinf(\overline{M})$ is dense in $\bar \sob^k(M)$ and we get an extension operator $E\colon\bar\sob^{k}(M)\to\bar\sob^{k}(\Omega)$ by working in coordinate patches. Just like before we can define $\bar\sob^{s}(M)$ for $s\geq 0$ to be a complex interpolation space and obtain an isomorphism with
	\begin{equation}
		\label{eq:sobolev spaces on Omega are quotients}
		\bar\sob^s(M)\cong \sob^{s}(\Omega)\diagup\dot\sob^{s}(\overline{\Omega\setminus M}).
	\end{equation}
	Notice that the above characterisation can be used to define $\bar\sob^{-s}(M)$ as well, albeit it is not completely obvious that the spaces do not depend on the choice of the inclusion $M\subset \Omega$. 
	
	The Sobolev embeddings (Lemma \ref{lemma:sobolev embedding}) and the Rellich compactness (Lemma \ref{lemma:rellich compactness}) extend to this setting without change.
	
	Before introducing the next class of Sobolev spaces, let us recall the trace theorem. Denote by $\gamma$ the Dirichlet trace, namely $\gamma\colon \Cinf(\overline{M})\to\Cinf(\del M)$ is given by $\gamma u=u|_{\del M}$.
	\begin{lemma}
		\label{lemma:sobolev trace}
		If $s>1/2$, $\gamma$ extends uniquely to a continuous map 
		\[
		\gamma\colon\bar\sob^s(M)\to\sob^{s-1/2}(\del M).\]
	\end{lemma}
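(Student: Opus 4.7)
The plan is to reduce to the half-space model $\overline{\R^n_+}$ by means of a finite atlas and a subordinate partition of unity, prove the key Fourier-analytic estimate there on Schwartz restrictions, and extend by density using the available extension operator $E\colon \bar\sob^s(M)\to \sob^s(\Omega)$. Since the definition of $\bar\sob^s(M)$ and of $\sob^{s-1/2}(\del M)$ is invariant under coordinate changes (being built from the complex interpolation construction), it will suffice to prove the local inequality
\[
\|\gamma u\|_{\sob^{s-1/2}(\R^{n-1})}\leq C\|u\|_{\bar\sob^s(\R^n_+)},\qquad u\in\overline{\Scal}(\R^n_+),
\]
and then patch by a partition of unity supported in boundary-flattening charts; the interior contributions contribute nothing to $\gamma$.

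For the half-space estimate I would first use the extension $E$ to replace $u$ by $\tilde u\in\sob^s(\R^n)$ with $\|\tilde u\|_{\sob^s(\R^n)}\leq C\|u\|_{\bar\sob^s(\R^n_+)}$; since $\gamma u=\tilde u|_{x_1=0}$ depends only on boundary values, it is enough to show that the full-space trace map satisfies $\|\tilde u|_{x_1=0}\|_{\sob^{s-1/2}(\R^{n-1})}\leq C\|\tilde u\|_{\sob^s(\R^n)}$ for $\tilde u\in\Scal(\R^n)$. Working with the partial Fourier transform in $x'$, one has $\widehat{\gamma\tilde u}(\xi')=(2\pi)^{-1/2}\int_\R \hat{\tilde u}(\xi_1,\xi')\,d\xi_1$, whence Cauchy--Schwarz yields
\[
|\widehat{\gamma\tilde u}(\xi')|^2 \leq C\Bigl(\int_\R \braket{\xi}^{-2s}\,d\xi_1\Bigr)\Bigl(\int_\R \braket{\xi}^{2s}|\hat{\tilde u}(\xi_1,\xi')|^2\,d\xi_1\Bigr),
\]
where $\braket{\xi}^2=1+\xi_1^2+|\xi'|^2$. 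The substitution $\xi_1=\braket{\xi'}\tau$ shows the first factor equals $\braket{\xi'}^{1-2s}\int_\R(1+\tau^2)^{-s}\,d\tau$. Multiplying by $\braket{\xi'}^{2s-1}$ and integrating over $\xi'$ collapses to the $\sob^s(\R^n)$-norm of $\tilde u$ by Fubini, giving the stated bound.

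The extension by continuity is then automatic: by density of $\overline{\Scal}(\R^n_+)$ in $\bar\sob^s(\R^n_+)$, and correspondingly of $\Cinf(\overline M)$ in $\bar\sob^s(M)$ (transferred via charts), the estimate defines $\gamma$ uniquely as a continuous operator on the completion, and this definition agrees with the classical one on smooth functions up to the boundary. The only place where hypotheses are actually used is the convergence of the integral $\int_\R(1+\tau^2)^{-s}\,d\tau$, which requires precisely $s>1/2$; this is therefore the sharp analytic obstruction and the only subtle step, the remainder being formal manipulation of Fourier transforms and standard bookkeeping with a partition of unity.
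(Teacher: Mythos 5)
Your proposal is correct and follows essentially the route the paper itself indicates (and only sketches): reduction to the flat model in boundary charts, a direct Fourier-analytic computation there, and globalisation via localisation and a partition of unity, with density giving uniqueness of the extension. The Cauchy--Schwarz estimate with the substitution $\xi_1=\braket{\xi'}\tau$ is exactly the ``direct computation'' the paper alludes to, and your identification of $s>1/2$ as the convergence condition for $\int_\R(1+\tau^2)^{-s}\,d\tau$ is the sharp point of the argument.
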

	The above is proven first in local coordinates by a direct computation and the carried over to $M$ via localisation and partitions of unity.
	
	We can now introduce the so-called ``Dirichlet-Sobolev'' spaces: For $s\geq 0$, $\sob^s_0(M)$ is the closure of $\Cinf_c(M)$ in $\bar \sob^s(M)$, thus its elements are obtained as limits in the $\sob^s(M)$ norm of sequences of smooth functions supported \textit{in the interior}.
	\begin{lemma}
		\label{lemma:dirichlet=supported}
		If $k$ is a nonnegative integer, then
		\[
		\sob^k_0(M)=\{u\in\sob^k(\Omega)\colon \supp u\subset\overline{M}\}\equiv \dot\sob^k(\overline{M}).\]
	\end{lemma}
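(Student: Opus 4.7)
The plan is to prove the two inclusions separately, the nontrivial one being $\dot\sob^k(\overline{M})\subset\sob^k_0(M)$.

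For the easy direction $\sob^k_0(M)\subset\dot\sob^k(\overline{M})$, I would take $u\in\sob^k_0(M)$ together with an approximating sequence $\phi_n\in\Cinf_c(M)$ in $\bar\sob^k(M)$. Extending each $\phi_n$ by zero yields $\tilde\phi_n\in\Cinf_c(\Omega)$, and because the zero extension is smooth across $\del M$ one has $\|\tilde\phi_n\|_{\sob^k(\Omega)}=\|\phi_n\|_{\bar\sob^k(M)}$. Hence $(\tilde\phi_n)$ is Cauchy in $\sob^k(\Omega)$; its limit $\tilde u$ lies in $\sob^k(\Omega)$, has $\supp\tilde u\subset\overline{M}$ (this condition is closed under $\sob^k$-convergence, since it amounts to $\tilde u$ pairing trivially with every test function supported in the open set $\Omega\setminus\overline{M}$), and restricts to $u$ under the quotient map of \eqref{eq:sobolev spaces on Omega are quotients}.

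For the harder direction, given $v\in\sob^k(\Omega)$ with $\supp v\subset\overline{M}$ I need to produce $\phi_n\in\Cinf_c(M)$ with $\phi_n\to v|_M$ in $\bar\sob^k(M)$. First I would use a finite partition of unity subordinate to a cover of $\overline{M}$ by interior coordinate patches and boundary collar charts in order to reduce to two local problems. Interior summands are handled by standard Friedrichs mollification. For a boundary summand, I would transport to coordinates so that $M$ is locally $\{x_1>0\}$ and $v$ is compactly supported in $\R^n$ with $\supp v\subset\{x_1\geq 0\}$. The core construction is then a two-parameter argument: define the shift $v_\epsilon(x_1,x')=v(x_1-\epsilon,x')$. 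Since translations act continuously on $L^2(\R^n)$ by Plancherel and commute with derivatives, they act continuously on $\sob^k(\R^n)$, so $v_\epsilon\to v$ in $\sob^k$ as $\epsilon\to 0^+$; crucially $\supp v_\epsilon\subset\{x_1\geq\epsilon\}$. Convolving $v_\epsilon$ with a standard mollifier $\eta_\delta$ of radius $\delta<\epsilon/2$ produces a smooth function supported in $\{x_1\geq\epsilon/2\}$, strictly inside the half-space, which converges to $v_\epsilon$ in $\sob^k$ as $\delta\to 0$. A diagonal sequence $(\epsilon_n,\delta_n)\to(0,0)$, multiplied by a fixed cutoff adapted to the chart to enforce compact support within $M$, yields the desired elements of $\Cinf_c(M)$. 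Summing the contributions of the partition of unity closes the argument.

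The principal obstacle is precisely the translation step. Without first shifting the support of $v$ away from $\{x_1=0\}$, direct mollification would in general spread mass into $\{x_1<0\}$ and thus fail to produce functions compactly supported in $M$; the shift opens the gap needed for mollification to preserve the support condition. That this shift is continuous in the $\sob^k$ topology, a direct consequence of the Fourier-analytic definition of the Sobolev norm, is what makes the strategy viable. A secondary, more bookkeeping-level issue is ensuring that the partition of unity and auxiliary cutoffs are chosen so that all operations remain consistent with compact support in the open manifold $M$.
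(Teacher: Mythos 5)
Your argument is correct and follows essentially the same route as the paper: the easy inclusion via closedness of the support condition under $\sob^k$-limits, and the hard inclusion via a partition of unity, interior mollification, and — in boundary charts — a translation pushing the support into the open half-space followed by mollification at a smaller scale, using strong continuity of translations in the $\sob^k$ norm. The only differences are cosmetic (explicit two-parameter diagonal argument and cutoffs versus the paper's single shift-and-approximate step), so nothing needs to be changed.
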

	\begin{proof}
		Remark first that the topology on $\sob^k(M)$ is induced by the norm
		\[
		\|u\|^2_{\sob^k(M)}=\sum_{l=1}^N\|P_lu\|^2_{L^2(M)},\]
		where $N$ is an integer depending on $k$ and $M$ and the $P_l$'s are differential operators of order not bigger than $k$. This implies, by locality of the $P_l$'s, that the closure of $\Cinf_c(M)$ in $\sob^k(M)$ is the same as that in $\sob^k(\Omega)$. Then, by continuity, it is clear that any $\sob^k(\Omega)$ limit of sequences in $\Cinf_c(M)$ cannot be supported at points outside $\overline{M}$, so that $\sob^k_0(M)\subset \dot\sob^k(\overline{M})$.
		
		Conversely, let $u$ be an $\sob^k(\Omega)$ distribution with support in $\overline{M}$. If the support does not intersect the boundary, then the statement is promptly checked in local coordinates using the density result for open sets of $\R^n$. Assume therefore that the support intersects the boundary and let $x_0$ be a point in the intersection. Choose a diffeomorphism $\chi\colon U\to \overline{\R^n}$ for $U$ a small enough open neighbourhood of $x_0$ in $M$ and mapping $U\cap\overline{M}$ diffeomorphically to $\R^n_+$. Look then at $\tilde{u}\equiv u\circ\chi^{-1}$, which is an $\sob^k(\R^n)$ distribution supported in $\overline{\R^n_+}$. We can shift it by $-s$ in the $x_1$-direction to obtain a distribution with support fully in $\R^n_+$ and then we find a $v$ in $\Cinf_c(\R^n_+)$ being $\epsilon$-close to $\tau_{-s}\tilde{u}$ in the $\sob^k(\R^n)$ norm. By continuity of $\tau_s$ in the $\sob^k$-norms, we obtain 
		\[
		\|\tau_{-s}\tilde{u}-v\|_{\sob^k(\R^n)}<\epsilon \implies \|\tilde{u}-v\|_{\sob^k(\R^n)}\leq \epsilon,\]
		so that the pullback of $v$ will be $\epsilon$-close to $u$ in the chart $U$. By using a partition of unity and summing up, we obtain the global version.
	\end{proof}
	In fact, the equality of the spaces $\sob^{s}_0(M)$ and $\dot\sob^{s}(\overline{M})$ holds in much greater generality, for any $s\notin \N+\frac{1}{2}$, see the exercises for Section 4.5 in \cite{taylor2023PDE1}.
	
	Above, we mentioned how we can define $\sob^s(M)$ for negative $s$ as a quotient. For negative integers $k$, which is our case of interest, there is another possible definition, intrinsic to $\overline{M}$. It relies on the functional analytic characterisation of the dual space of a closed linear subspace $F$ of a Banach space $E$, namely
	\[
	F^\ast\cong E^\ast \diagup F^\perp,\]
	where $F^\perp$ is the annihilator of $F$ in $E^\ast$. In the above, take $E=\sob^k(\Omega)$ and $F=\sob^k_0(M)$, which is the closure of $\Cinf_c(M)$ in the $\sob^k(\Omega)$-norm. Then, we claim that the annihilator of $F$ is exactly $\dot\sob^{-k}(\overline{\Omega\setminus M})$. Indeed, if $u$ belongs to this last space, then it must vanish along any sequence $(\phi_j)\subset\Cinf_c(M)$ and, by continuity, on the limits too. Viceversa, if $u$ is a distribution in $\sob^{k}(\Omega)$, vanishing on every $\phi\in\bar \sob^{-k}(M)$, then no point $x\in M$ can belong to the support of $u$. For, if this were the case, we could find a $\phi\in\Cinf_c(M)$, $\phi(x)\neq 0$, and $u(\phi)\neq 0$.
	
	Putting together the quotient characterisation and the above argument, we obtain
	\begin{lemma}
		\label{lemma:dual of dirichlet-sobolev space}
		For any compact manifold $\Omega$ and open submanifold $M$ with smooth boundary and for any integer $k\geq 0$ we have a natural isomorphism
		\[
		\sob^k_0(M)^\ast\cong \bar\sob^{-k}(M).\]
	\end{lemma}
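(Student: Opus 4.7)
The plan is to apply the standard Banach space duality
\[
F^\ast \cong E^\ast \diagup F^\perp
\]
with $E=\sob^k(\Omega)$ and $F=\sob^k_0(M)$. The subspace $F$ is closed by definition, so the formula applies and yields $\sob^k_0(M)^\ast \cong \sob^k(\Omega)^\ast \diagup \sob^k_0(M)^\perp$. Two identifications then complete the proof: first, $\sob^k(\Omega)^\ast \cong \sob^{-k}(\Omega)$, which is the familiar duality on the closed manifold $\Omega$ (obtained by extending the $L^2$ pairing via the Fourier characterisation of $\sob^s$); second, the annihilator $\sob^k_0(M)^\perp$ equals the space of supported distributions $\dot\sob^{-k}(\overline{\Omega\setminus M})$. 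Once these are in hand, the quotient characterisation \eqref{eq:sobolev spaces on Omega are quotients} applied with $s=-k$ gives
\[
\sob^{-k}(\Omega)\diagup \dot\sob^{-k}(\overline{\Omega\setminus M}) \cong \bar\sob^{-k}(M),
\]
and the composition of the two isomorphisms is the claimed natural one.

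The annihilator computation splits into two inclusions. For $\supset$, take $u\in \dot\sob^{-k}(\overline{\Omega\setminus M})$; then $\supp u \cap M = \emptyset$, so $u$ pairs to zero with every $\phi\in \Cinf_c(M)$, and since $\Cinf_c(M)$ is dense in $\sob^k_0(M)$ by definition, continuity of the pairing $\sob^{-k}(\Omega)\times \sob^k(\Omega)\to\C$ propagates the vanishing to all of $\sob^k_0(M)$. For $\subset$, suppose $u\in\sob^{-k}(\Omega)$ annihilates $\sob^k_0(M)$, so in particular $u(\phi)=0$ for every $\phi\in\Cinf_c(M)$. Then, viewing $u$ as a distribution on $\Omega$, its restriction to the open set $M$ vanishes, so $\supp u \subset \Omega\setminus M$, and since supports are closed, $\supp u\subset \overline{\Omega\setminus M}$, i.e.\ $u\in\dot\sob^{-k}(\overline{\Omega\setminus M})$.

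The main subtlety I expect is bookkeeping the pairings so that the final isomorphism really is natural, i.e.\ induced by extension of the $L^2$ pairing $\int_M u\bar\phi$ for smooth $u,\phi$. Concretely, one must check that the composition
\[
\sob^k_0(M)^\ast \xrightarrow{\sim} \sob^{-k}(\Omega)\diagup \dot\sob^{-k}(\overline{\Omega\setminus M}) \xrightarrow{\sim} \bar\sob^{-k}(M)
\]
sends the functional $\phi\mapsto \int_M v\bar\phi$ (for $v\in\Cinf(\bar M)$) to the equivalence class of any $\sob^{-k}(\Omega)$-extension of $v$, which is in turn the image of $v$ in $\bar\sob^{-k}(M)$. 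This is a straightforward chase through the definitions once the two isomorphisms above are established, and it also shows independence of the result from the auxiliary choice of $\Omega$, addressing the caveat raised after \eqref{eq:sobolev spaces on Omega are quotients}.
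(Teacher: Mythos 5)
Your proposal is correct and follows essentially the same route as the paper: the Banach duality $F^\ast\cong E^\ast\diagup F^\perp$ with $E=\sob^k(\Omega)$, $F=\sob^k_0(M)$, the identification of the annihilator with $\dot\sob^{-k}(\overline{\Omega\setminus M})$ via density of $\Cinf_c(M)$ and a support argument, and the quotient characterisation \eqref{eq:sobolev spaces on Omega are quotients} at $s=-k$. Your extra remarks on naturality of the pairing and independence of the choice of $\Omega$ are a welcome addition the paper only gestures at.
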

	
	Let $P$ a differential operator of order $2k$ written as 
	\[
	P=\sum_{l=1}^LA_lB_l
	\]
	for some differential operators $A_l,B_l$ or order $k$ with $\Cinf(\overline{M})$-coefficients. Then, $P$ defines a continuous operator $P\colon \sob^k_0(M)\to\bar\sob^{-k}(M)$ and for $u,v\in\sob^k_0(M)$ the pairing $\braket{u,Pv}$ makes sense, in view of the Lemma \ref{lemma:dual of dirichlet-sobolev space}. On the other hand, if $A^\dagger_l$ denotes the formal adjoint of $A_l$ on $\overline{M}$, then in fact
	\begin{equation}
		\label{eq:pairing of Hk0 and H-k is L2 scalar product}
	\braket{u,Pv}=\sum_{l=1}^L(A^\dagger_lu,B_lv)_{L^2(M)}
	\end{equation}
	and \eqref{eq:sobolev spaces on Omega are quotients} implies, by locality, that $P$ extends for each $s\in\R$ to a continuous map
	\[P\colon\bar\sob^s(M)\to\bar\sob^{s-2k}(M).\]
	In particular the case $s=k$ provides us with an extension of \eqref{eq:pairing of Hk0 and H-k is L2 scalar product} to $v\in \bar\sob^k(M)$, provided $u\in\sob^k_0(M)$.
	
	The last result in this section is going to play an important rôle later on. It is known in the literature as the \textit{Poincaré} or \textit{Poincaré-Wirtinger inequality}.
	\begin{theorem}\label{theo:poincare inequality}
		Suppose $\overline{M}$ is a compact, connected, smooth Riemannian manifold with (nonempty) boundary. There exists a constant $C=C(\bar M)>0$ such that for all $u\in\sob^1_0(M)$
		\begin{equation}
			\label{eq:poincare inequality}
			\|u\|^2_{L^2(M)}\leq C\|du\|^2_{L^2(M)}.
		\end{equation}
	\end{theorem}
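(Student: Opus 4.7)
The strategy I would follow is a contradiction argument built on Rellich compactness. Suppose no such constant exists; then I can find a sequence $(u_n) \subset \sob^1_0(M)$ with $\|u_n\|_{L^2(M)} = 1$ and $\|du_n\|_{L^2(M)} \to 0$. Consequently $(u_n)$ is bounded in $\sob^1_0(M)$, which is a closed subspace of the Hilbert space $\bar\sob^1(M)$.

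I would first apply the Rellich compactness of Lemma \ref{lemma:rellich compactness} (extended to manifolds with boundary) to the embedding $\bar\sob^1(M) \hookrightarrow L^2(M)$, extracting a subsequence that converges in $L^2(M)$ to some $u$ with $\|u\|_{L^2(M)} = 1$. Because $\sob^1_0(M)$ is a Hilbert space and the sequence is bounded in it, I can refine further to a subsequence converging weakly in $\sob^1_0(M)$; this weak limit must coincide with $u$, so in particular $u \in \sob^1_0(M)$. Weak lower semicontinuity of the $L^2$-norm then yields
\[
\|du\|_{L^2(M)} \le \liminf_n \|du_n\|_{L^2(M)} = 0,
\]
so $du = 0$ in $M$ and, by connectedness of $\bar M$, $u$ is almost everywhere equal to a constant $c$.

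To finish, I need $c = 0$, and this is where the hypothesis $\del M \neq \emptyset$ enters. I would invoke the trace theorem (Lemma \ref{lemma:sobolev trace}): the Dirichlet trace $\gamma \colon \bar\sob^1(M) \to \sob^{1/2}(\del M)$ is continuous and vanishes identically on $\Cinf_c(M)$, hence on its closure $\sob^1_0(M)$ in $\bar\sob^1(M)$. Applied to the constant $u = c$, this gives $c = \gamma u = 0$ on $\del M$, so $u \equiv 0$, contradicting $\|u\|_{L^2(M)} = 1$.

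The main obstacle is the passage to the weak limit and the identification of $du$: one needs both that $\sob^1_0(M)$ is weakly closed in $\bar\sob^1(M)$ (automatic, being a closed linear subspace) and that the weak $\sob^1$-limit of $(u_n)$ is the same object as its strong $L^2$-limit, after which weak lower semicontinuity of $\|d\,\cdot\,\|_{L^2(M)}$ does the work. The rest, in particular the vanishing of the boundary trace on $\sob^1_0(M)$, is a direct application of the results already collected in this section.
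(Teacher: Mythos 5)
Your argument is correct, but it is a genuinely different proof from the one in the paper. You run the standard compactness/contradiction scheme: normalize a putative counterexample sequence, use Rellich compactness of $\bar\sob^1(M)\hookrightarrow L^2(M)$ to get a strong $L^2$ limit of norm one, pass to a weak $\sob^1_0$ limit (legitimately identified with the $L^2$ limit, since $\sob^1_0(M)$ is a closed, hence weakly closed, subspace), use weak lower semicontinuity to conclude $du=0$, and kill the resulting constant via the vanishing of the Dirichlet trace on $\sob^1_0(M)$ (Lemma \ref{lemma:sobolev trace}); connectedness of $\bar M$ and nonemptiness of $\del M$ enter exactly where they should. The paper instead argues directly and constructively: by density it suffices to treat $u\in\Cinf_c(M)$, and one writes $u(x)=u(y)-\int_{c_x}du$ along the shortest geodesic $c_x$ from $x$ to a boundary point $y$ (where $u(y)=0$), then applies H\"older's inequality, squares, integrates over $M$ and uses Fubini together with $L(c_x)\le\mathrm{diam}(M)$. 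The trade-off is clear: the paper's route yields an explicit constant, essentially $C\sim\mathrm{diam}(M)^2$, with no functional-analytic machinery beyond density, whereas your route gives no quantitative control on $C$ but is more robust — it adapts with little change to Poincar\'e--Wirtinger-type inequalities, other boundary conditions, or more general first-order ``gradient'' operators, and it reuses the Rellich and trace lemmas already established in this section. Both are complete proofs of the stated inequality.
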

	\begin{proof}
		By density, it suffices to prove the result for $u\in\Cinf_c(M)$. For $x\in M$ let $c_x$ be a smooth curve joining $x$ with a point $y$ on the boundary and write 
		\begin{equation*}
			u(x)=u(y)-\int_{c_x}du.
		\end{equation*}
		Since $y\in\del M$ and $u$ is compactly supported in the interior, $u(y)=0$. Picking $c_x$ to be the shortest geodesic connecting $x$ and $y$, we can estimate the absolute value of $u(x)$ by ($ds$ below is the measure on $c$)
		\begin{equation*}
			|u(x)|\leq\int_{c_x}\|du\|_{T^\ast_ x M}ds
		\end{equation*}
		and thus by Hölder's inequality
		\begin{equation*}
			|u(x)|\leq \sqrt{L(c_x)}\left(\int_{c_x}\|du\|^2_{T_x^\ast M}ds\right)^{1/2}.	
		\end{equation*}
		Squaring and integrating the above inequality over $M$, together with the obvious estimate $L(c_x)\leq \mathrm{diam}(M)$, $\mathrm{diam}(M)=\sup_{M\times M}\mathrm{dist}(x,y)$, gives the claim after one application of Fubini's theorem.
	\end{proof}	
	
	\section{The Laplace operator on a Riemannian manifold with boundary}\label{sect:laplace}
	
	In this section, $\overline{M}$ is a compact, connected, smooth Riemannian manifold with boundary $\del M$ and metric $g$. If needed, just like in the previous Section, $\overline{M}$ will be considered to be embedded in a closed manifold $\Omega$ as the closure of the open submanifold $M$. The Riemannian metric induces the Laplace-Beltrami operator $\Delta$, defined as the differential operator acting on $\Cinf_c(M)$ (adopting the Einstein convention on lowercase Latin indices)
	\begin{equation*}
		\Delta u=\frac{1}{\sqrt{\det g}}\partial_j\left(\sqrt{\det g}g^{jk}\partial_k u\right).
	\end{equation*}
	The above expression is coordinate-invariant and $\Delta$ can be expressed as an intrinsic differential operator using the Hodge $\ast$ induced by $g$. If $M$ has empty boundary, classical results imply that, when acting on $L^2$-based Sobolev spaces, $\Delta$ is Fredholm, has compact resolvent and thus discrete spectrum. Our purpose in this Section is to analyse one possible modification in the case $\del M\neq \emptyset$ in order to achieve discreteness of the spectrum. 	
	
	Let us remark, for starters, that the discussion below holds true for any second-order semi-bounded scalar differential operator, symmetric with respect to a smooth density $\gamma$ on $M$. However, for clarity (and since it is the most interesting case), we will fix the Riemannian density $\gamma_g$ and consider only the Laplace-Beltrami operator (we could add a smooth potential, but that would muddy the discussion). Moreover, in the interest of keeping the notation lighter, we will drop the overline from the Sobolev spaces of extendible distributions, that is
	\[
	\sob^k(M)\equiv \bar{\sob}^k(M).\]

	Thus, consider $\Delta$ as an unbounded operator on $L^2(M)$, acting on the dense domain $\Cinf_c(M)$. We start by investigating the solvability of $\Delta$ under the Dirichlet boundary condition, namely we look at the system for $f\in \Ccal(M)$
	\begin{equation}
		\label{eq:dirichlet problem}
		\left\{\begin{aligned}
			\Delta u&=f,\\
			u|_{\partial M}&=0.
		\end{aligned}\right.
	\end{equation}
	For $u\in\Cinf_c(M)$, we have by Green's identities 
	\begin{equation}\label{eq:green implies Laplace is positive}
		(\Delta u,u)=\|du\|^2_{L^2(M)}.
	\end{equation}
	On the other hand, Poincaré's inequality says that the $\sob^1(M)$-norm of $u$ and the $L^2$-norm of $du$ are equivalent on $\sob^1_0(M)$. Finally, the discussion preceding Theorem \ref{theo:poincare inequality} holds true for $\Delta$ and shows that it acts continuously
	\begin{equation}\label{eq:friedrichs extension of Laplace}
		\Delta\colon\sob^1_0(M)\to\sob^{-1}(M).
	\end{equation}
	By density, \eqref{eq:green implies Laplace is positive} holds true for $u\in\sob^1_0(M)$ and the Poincaré inequality implies the existence of a constant $C>0$ such that for all $u\in\sob^1_0(M)$
	\begin{equation*}
		(\Delta u,u)\ge C\|u\|^2_{\sob^1(M)}.
	\end{equation*}
	Bounding the LHS from above by $\|\Delta u\|_{\sob^{-1}(M)}\|u\|_{\sob^1(M)}$ shows further that
	\begin{equation}\label{eq:Laplace is bounded below}
		\|\Delta u\|_{\sob^{-1}(M)}\ge C\|u\|_{\sob^1(M)},
	\end{equation}
	so that $\Delta$ is bounded from below on $\sob^1_0(M)$. In particular, 0 is not an eigenvalue of $\Delta$ on $\sob^1_0(M)$.
	\begin{lemma}
		The extension \eqref{eq:friedrichs extension of Laplace} is bijective.
	\end{lemma}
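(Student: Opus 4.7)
The plan is to prove injectivity directly from the a priori estimate already stated, and to prove surjectivity by the Riesz representation theorem applied to an equivalent Hilbert structure on $\sob^1_0(M)$.

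Injectivity is immediate: if $u \in \sob^1_0(M)$ satisfies $\Delta u = 0$, then the lower bound \eqref{eq:Laplace is bounded below} gives $\|u\|_{\sob^1(M)} \leq 0$, hence $u = 0$.

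For surjectivity, I would introduce the bilinear form $a(u,v) := (du, dv)_{L^2(M)}$ on $\sob^1_0(M) \times \sob^1_0(M)$. By the Cauchy--Schwarz inequality $a$ is bounded, and by the Poincaré inequality (Theorem \ref{theo:poincare inequality}) it is coercive on $\sob^1_0(M)$; combined with the standard inequality $\|du\|_{L^2} \leq \|u\|_{\sob^1}$, this shows that $a$ defines an inner product on $\sob^1_0(M)$ whose norm is equivalent to $\|\cdot\|_{\sob^1(M)}$. In particular $(\sob^1_0(M), a)$ is a Hilbert space. Now fix $f \in \sob^{-1}(M)$. By Lemma \ref{lemma:dual of dirichlet-sobolev space}, $f$ acts as a continuous linear functional on $\sob^1_0(M)$, so the Riesz representation theorem delivers a unique $u \in \sob^1_0(M)$ with
\[
a(u,v) = \braket{f, v} \qquad \text{for all } v \in \sob^1_0(M).
\]

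It remains to identify $a(u,v)$ with $\braket{\Delta u, v}$, where $\braket{\cdot,\cdot}$ is the $\sob^{-1}(M)$--$\sob^1_0(M)$ duality pairing. Green's identity yields $(\Delta u, v)_{L^2} = (du, dv)_{L^2}$ for $u, v \in \Cinf_c(M)$; this is the content of the extended pairing \eqref{eq:pairing of Hk0 and H-k is L2 scalar product} applied to $\Delta$, so by density and the continuity of $\Delta\colon\sob^1_0(M)\to\sob^{-1}(M)$ established in \eqref{eq:friedrichs extension of Laplace}, the identity $\braket{\Delta u, v} = a(u,v)$ persists for all $u,v \in \sob^1_0(M)$. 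Combining the two displays, $\braket{\Delta u - f, v} = 0$ for every $v \in \sob^1_0(M)$; non-degeneracy of the duality pairing (Lemma \ref{lemma:dual of dirichlet-sobolev space}) then forces $\Delta u = f$ in $\sob^{-1}(M)$.

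The only subtle point is the identification $a(u,v) = \braket{\Delta u, v}$ for elements of $\sob^1_0(M)$ rather than of $\Cinf_c(M)$; this is not a computational difficulty but requires invoking the density of $\Cinf_c(M)$ in $\sob^1_0(M)$ together with the continuity of both sides, which have already been prepared in the preceding discussion. Everything else is either a direct consequence of Poincaré's inequality or a standard application of Riesz's theorem, so no serious obstacle is expected.
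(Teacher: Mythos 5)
Your argument is correct, and the injectivity part coincides with the paper's (both read it off the lower bound \eqref{eq:Laplace is bounded below}). For surjectivity, however, you take a genuinely different, though equally standard, route: you turn the Dirichlet form $a(u,v)=(du,dv)_{L^2}$ into an equivalent inner product on $\sob^1_0(M)$ via the Poincar\'e inequality and invoke the Riesz representation theorem (essentially Lax--Milgram), which delivers existence and uniqueness of $u$ with $\braket{\Delta u,v}=\braket{f,v}$ in one stroke; the identification of $a(u,v)$ with the duality pairing that you justify by density is in fact already available from \eqref{eq:pairing of Hk0 and H-k is L2 scalar product}, which the paper states for $u,v\in\sob^k_0(M)$. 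The paper instead argues in two steps: it first shows that the range of $\Delta$ is closed, using the same lower bound \eqref{eq:Laplace is bounded below} to pass a Cauchy sequence of preimages to the limit, and then shows the range is dense by taking a functional annihilating it, identifying it with an element $v\in\sob^1_0(M)$ through Lemma \ref{lemma:dual of dirichlet-sobolev space}, and concluding $\|dv\|_{L^2}=0$, hence $v=0$ by Poincar\'e. Both proofs rest on exactly the same two ingredients (coercivity from Theorem \ref{theo:poincare inequality} and the duality $\sob^1_0(M)^\ast\cong\sob^{-1}(M)$); yours bypasses the closed-range discussion entirely and is somewhat more self-contained, while the paper's version makes the closed-range property explicit, which is the structure one would reuse for more general semi-bounded operators. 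The only point to watch in your write-up is the complex setting: $a$ should be taken as the hermitian form and the Riesz identification is conjugate-linear, a bookkeeping matter the paper also glosses over and which does not affect the conclusion.
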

	\begin{proof}
		The injectivity is a direct consequence of \eqref{eq:Laplace is bounded below}. Moreover, the range of $\Delta$ must be closed: if $(v_j)$ is a sequence in $\ran (\Delta)$ converging to $v\in \sob^{-1}(M)$, then the sequence $(u_j)\subset\sob^1_0(M)$ such that $\Delta u_j=v_j$ also converges to $u\in H^1_(M)$ (since the latter is a closed subspace of a Hilbert space and in view of \eqref{eq:Laplace is bounded below}) and then by continuity $\Delta u=v$. 
		
		Assume now $\Delta$ is not surjective, thus there must exist an element $v\in\sob^{-1}(M)^\ast$ such that $\braket{v,\Delta u}=0$ for all $u\in\sob^1_0(M)$. By Lemma \ref{lemma:dual of dirichlet-sobolev space}, however, $\sob^{-1}(M)^\ast\cong\sob^1_0(M)$, thus we can pick $u=v$ and see that $0=(\Delta v,v)=\|dv\|^2_{L^2(M)}$, so that $v=0$. This proves the lemma.
	\end{proof}
	Let $T\colon\sob^{-1}(M)\to\sob^1_0(M)$ be the bounded inverse to $\Delta$. Then, if $u_1,u_2\in\sob^1_0(M)$ with $v_j=\Delta u_j$, we can exploit \eqref{eq:green implies Laplace is positive} to obtain
	\begin{align*}
		\braket{Tv_1,v_2}&=\braket{T\Delta u_1,\Delta u_2}=\braket{u_1,\Delta u_2}\\
		&=(du_1,du_2)=\braket{\Delta u_1,u_2}\\
		&=(v_1,Tv_2).
	\end{align*}
	Thus, the restriction of $T$ to $L^2(M)$ is selfadjoint, and by the Rellich lemma it is also compact on $L^2(M)$. Directly from above, $T$ is also positive. It follows that $\Delta\colon\sob^1_0(M)\cap\sob^2(M)\to L^2(M)$ is bijective as well. This extension is called the \textit{Dirichlet extension}\footnote{This happens to coincide with the \textit{Friedrichs extension} of $\Delta$, see \cite{borthwick_2020_spectral_theory}, Subsection 6.1.2.} of the Laplacian, and will subsequently be denoted by $\Delta_D$.
	
	We prove now that the spectrum of the Dirichlet extension is discrete. We give two proofs of this fact, the first one easier after the preparations done until now, the second more in tune with our approach in Section \ref{sect:spectral function}:
	\begin{enumerate}
		\item Let $(u_j)$ be an orthonormal basis of eigenfunctions of $T$ with eigenvalues $\mu_j>0$, converging to 0. We have $u_j\in\sob^1_0(M)$ by definition, and furthermore 
		\[
		\Delta_D u_j=\Delta_D \frac{1}{\mu_j}T u_j=\frac{1}{\mu_j}u_j,
		\]
		so that $u_j$ are eigenfunctions of $\Delta_D$ with eigenvalues $\lambda_j=\frac{1}{\mu_j}$ growing to $\infty$. Since $(u_j)$ is complete, it diagonalises $\Delta_D$ as well and the spectrum is discrete.
		\item Recall that the spectral theorem associates with $\Delta_D$ a projection-valued measure $P\colon \Bfrak(\R)\to \Bcal(L^2(M))$, namely a map that assigns to each Borel set in $\R$ an orthogonal projection in $L^2(M)$, and that there is a bounded functional calculus for $\Delta_D$. This means that we can make sense of the operator $f(\Delta_D)$ for any bounded measurable function $f$, defining it for any Borel set $E$ by
		\begin{equation*}
			\label{eq:functional calculus}
			(\phi,f(\Delta_D)\phi)=\int_\R f(t)d(\phi,P_E\phi),
		\end{equation*}
		where the integral is taken with respect to the spectral measure $(\phi, P_E\phi)$ of $\Delta_D$. Recall that it is a positive measure on the Borel $\sigma$-algebra of $\R$.
		In particular, with each interval $I\subset\R$ we associate the operator $P_I=P(\chi_{I})$. Let $E_\lambda$ be the spectral family of $\Delta_D$, that is, the family of operators $P_{(-\infty,\lambda]}$ for $\lambda\in\R$. Clearly, $E_\lambda$ is supported in the positive real line since $\Delta_D$ is a positive operator (with empty kernel), furthermore $E_\lambda$ commutes with $\Delta_D$ (this is easily proven by splitting $L^2(M)$ orthogonally in $E_\lambda L^2\oplus (E_\lambda L^2)^\perp$ and looking at the action of $\Delta_D$ there).  We record the following inequality in a Lemma for later reference.
		\begin{lemma}
			\label{lemma:spectral estimate}
			Let $L^2(M)\ni u=E_\lambda u$. Then
			$\displaystyle{\|\Delta_D u\|\leq \lambda \|u\|}$.
		\end{lemma}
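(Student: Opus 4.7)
The plan is a direct application of the bounded Borel functional calculus for $\Delta_D$ recalled just above the lemma. The hypothesis $u=E_\lambda u$ says that $u$ lies in the range of the spectral projection $P_{(-\infty,\lambda]}$, which is equivalent to the scalar spectral measure $\mu_u(B):=(u,P_B u)$ being supported in $(-\infty,\lambda]$. Because $\Delta_D$ is positive with empty kernel, $\mu_u$ is in fact supported in $(0,\lambda]$, and in particular has compact support.

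First I would check that $u$ actually lies in $\dom(\Delta_D)$, so that $\|\Delta_D u\|$ is meaningful. By the spectral theorem, $\dom(\Delta_D)=\{v\in L^2(M)\colon \int t^2\,d\mu_v(t)<\infty\}$, and since $\mu_u$ is compactly supported in $[0,\lambda]$ the integral $\int_0^\lambda t^2\, d\mu_u(t)$ is finite (indeed $\le \lambda^2\|u\|^2$), so $u\in\dom(\Delta_D)$.

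Next I would compute $\|\Delta_D u\|^2$ via the functional calculus applied to $f(t)=t$ and $f(t)=t^2$:
\[
\|\Delta_D u\|^2=(u,\Delta_D^2 u)=\int_{0}^{\lambda} t^2\, d\mu_u(t)\le \lambda^2\int_{0}^{\lambda} d\mu_u(t)=\lambda^2\,\mu_u([0,\lambda])=\lambda^2\|u\|^2,
\]
where the middle inequality uses that $t\le\lambda$ on the support of $\mu_u$ and the last equality uses $u=E_\lambda u$ so that $\mu_u([0,\lambda])=(u,E_\lambda u)=(u,u)$. Taking square roots yields the claim.

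There is essentially no obstacle here beyond bookkeeping: the only point to be careful about is the domain question, which is settled by the compact support of $\mu_u$. Everything else is a one-line estimate once the spectral representation is in place.
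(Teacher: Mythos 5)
Your proof is correct and follows essentially the same route as the paper's: both reduce the claim to $\|\Delta_D u\|^2=(u,\Delta_D^2 u)\le\lambda^2\|u\|^2$ using that the spectral measure of $u$ is supported in $[0,\lambda]$. The paper merely packages this as an equivalence ($u=E_\lambda u$ iff $(u,Au)\le\lambda\|u\|^2$) applied with $A=\Delta_D^2$, whereas you compute directly with $\mu_u$ and, helpfully, also make the domain check explicit; the substance is the same.
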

		\begin{proof}
			We split the proof in two steps:
		\begin{itemize}
			\item If $A$ is any positive selfadjoint operator and $E_\lambda$ is the spectral family of $A$, then $u=E_\lambda u$ is equivalent to saying that $(u,A u)\leq \lambda \|u\|^2$. Indeed, using the positivity of $A$ first and of the spectral measure $P_t$ second we have
			\begin{align*}
				(u,A u)&=(u,A E_\lambda u)=(u,E_\lambda A u)\\
				&=\int_\R t\chi_{(-\infty,\lambda]}(t)d(u,P_t u)\\
				&=\int_0^\lambda td(u,P_t u)\leq \lambda \int_0^\lambda d(u,P_tu)\\
				&\le\lambda\int_\R d(u,P_t u) =\lambda(u,u).
			\end{align*}
			All steps above are reversible since $A$ is selfadjoint and positive, proving the claimed equivalence.
			\item If $(u,\Delta_D u)\le \lambda\|u\|^2$, then we have
			\begin{equation*}
				\|\Delta_D u\|_{L^2}^2=(\Delta_D u,\Delta_D u)=(u,\Delta_D^2 u)\le \lambda^2\|u\|_{L^2}^2,
			\end{equation*}
			using the previous point with $A=\Delta_D^2$. This is the claimed inequality after taking the square root.
		\end{itemize}
		\end{proof}
		In particular, by the Poincaré inequality and Lemma \ref{lemma:spectral estimate}, $\|u\|_{\sob^1(M)} \leq C\lambda\|u\|_{L^2(M)}$. The set of such $u$'s with $\|u\|_{L^2(M)}\le1$ is compact in view of Rellich compactness, thus the range of $E_\lambda$ must be finite dimensional. Since this holds true for all generalised eigenfunctions, the spectrum is discrete and $L^2(M)$ decomposes as the the sum of finite-dimensional eigenspaces.
	\end{enumerate}
	
	To close this Section, we mention the following result establishing the higher regularity of solutions to the Dirichlet problem.
	\begin{theorem}[Higher regularity]
		In \eqref{eq:dirichlet problem}, assume the source $f\in\sob^{k-1}(M)$ for some $k\in\N$. Then, if $u\in\sob^1_0(M)$ solves \eqref{eq:dirichlet problem}, we have $u\in\sob^{k+1}(M)$ and for all $u\in\sob^{k+1}(M)\cap\sob^{1}_0(M)$ we have the \textit{elliptic regularity} estimate
		\begin{equation*}
			\|u\|^2_{\sob^{k+1}}\lesssim \|\Delta_D u\|^2_{\sob^{k-1}}+\|u\|^2_{\sob^{k}}.
		\end{equation*}
		In particular, if $u$ is an eigenfunction, then $u\in\Cinf(\overline{M})$.
	\end{theorem}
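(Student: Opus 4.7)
The plan is to proceed by induction on $k$, splitting $u$ into an interior piece and a boundary piece via a partition of unity $\{\chi_0,\chi_1,\dots,\chi_N\}$ subordinate to an open cover of $\overline{M}$, where $\chi_0$ is compactly supported in $M$ and each $\chi_j$ ($j\ge 1$) is supported in a boundary chart $U_j$ diffeomorphic to a half-space. Since $\Delta$ is a differential operator of order $2$ with smooth coefficients, each product rule gives $\Delta(\chi_j u)=\chi_j f+[\Delta,\chi_j]u$, where the commutator is first-order and involves only lower regularity of $u$, so if we already have $u\in\sob^{k}(M)$ the source for $\chi_j u$ lies in $\sob^{k-1}$. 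Thus it suffices to prove, in each chart, that the localized equation gains two derivatives.

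For the interior piece $\chi_0 u$, extension by zero gives a compactly supported element of $\sob^{1}(\R^n)$ solving an elliptic equation with smooth coefficients and source in $\sob^{k-1}(\R^n)$; standard interior elliptic regularity (via the Fourier characterization of $\sob^s(\R^n)$ and the ellipticity estimate $\langle\xi\rangle^{2}\lesssim 1+|\sigma_2(\Delta)(x,\xi)|$ after freezing coefficients, together with a routine commutator argument) yields $\chi_0 u\in\sob^{k+1}(\R^n)$ with the desired estimate.

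For a boundary piece $\chi_j u$, the chart flattens $\del M\cap U_j$ to $\{x_1=0\}$ and preserves the Dirichlet condition, turning the problem into one for an elliptic operator $P=\sum a^{ij}\partial_i\partial_j+\text{l.o.t.}$ on $\R^n_+$ with $v\in\sob^1_0(\R^n_+)$, $Pv\in\sob^{k-1}$. The key technical step is the use of tangential difference quotients $D^h_\ell v=h^{-1}(v(\cdot+he_\ell)-v)$ for $\ell\ge 2$: these preserve $\sob^{1}_0$ because they do not move mass across $\{x_1=0\}$, and testing the equation against $D^{-h}_\ell D^h_\ell v$ and using the Gårding-type coercivity inherited from Green's identity \eqref{eq:green implies Laplace is positive} and Poincaré's Theorem \ref{theo:poincare inequality}, one bounds tangential derivatives uniformly in $h$; passing to the limit gives $\partial_\ell v\in\sob^{1}$. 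Iterating over all tangential multi-indices up to order $k$ provides all derivatives of $v$ involving at most one $\partial_1$. Finally the equation is solved algebraically for $\partial_1^2 v$, namely $a^{11}\partial_1^2 v=Pv-\sum_{(i,j)\ne(1,1)}a^{ij}\partial_i\partial_j v-\text{l.o.t.}$, with $a^{11}$ bounded away from zero by ellipticity; this recovers the missing normal regularity and closes the induction step, yielding both $\chi_j u\in\sob^{k+1}$ and the stated estimate after summing over $j$.

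The main obstacle is precisely this boundary step, and within it the base case $k=1$: one must verify carefully that the tangential difference-quotient test function is an admissible element of $\sob^1_0$ and that the resulting bilinear estimates close without picking up derivatives one does not yet control, which is where the Poincaré inequality and the positivity \eqref{eq:green implies Laplace is positive} are essential. Once the estimate is established for all $k$, the statement about eigenfunctions follows by bootstrap: if $\Delta_D u=\lambda u$ and $u\in L^2(M)=\sob^{0}$, then $u\in\sob^2\cap\sob^1_0$, hence $\lambda u\in\sob^{2}$ and so $u\in\sob^{4}$, and inductively $u\in\bigcap_k \sob^k(M)$, which embeds into $\Cinf(\overline{M})$ by iterating Lemma \ref{lemma:sobolev embedding}.
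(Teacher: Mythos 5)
The paper does not actually prove this theorem: it is stated as a quoted result (with the details deferred to the standard references, essentially \cite{taylor2023PDE1}), so there is no in-paper argument to compare against. Your proposal is the classical self-contained proof — localization by a partition of unity, interior elliptic regularity for the cutoff supported away from $\del M$, boundary flattening, Nirenberg tangential difference quotients tested against $D^{-h}_\ell D^h_\ell v$ to gain tangential derivatives, and then algebraic recovery of $\del_1^2 v$ from the equation using $a^{11}\neq 0$ — and it is correct in outline, including the bootstrap for eigenfunctions ($u\in\sob^1_0$, $\Delta_D u=\lambda u\in L^2$ gives $u\in\sob^2$, then inductively $u\in\bigcap_k\sob^k\subset\Cinf(\overline M)$ via the Sobolev embedding for extendible distributions). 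Two points would need explicit care in a full write-up. First, after flattening, the operator has variable coefficients, so the coercivity you invoke is genuinely a G\r{a}rding inequality for the chart operator, with the lower-order terms absorbed into the $\|u\|^2_{\sob^k}$ term on the right-hand side of the stated estimate (Green's identity and Poincar\'e as written in the paper apply to $\Delta$ on $M$, not verbatim to the frozen-coefficient chart operator); second, the difference-quotient test function must carry the cutoff (i.e.\ you test with $D^{-h}_\ell(\chi^2 D^h_\ell v)$ or similar), since the chart is bounded and translation only makes sense on the support of $\chi_j$, and the commutators with $\chi_j$ this produces are exactly the terms controlled by $\|u\|_{\sob^k}$. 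For the higher induction steps, note also that recovering \emph{all} normal derivatives up to order $k+1$ requires differentiating the solved-for identity $a^{11}\del_1^2 v=\dots$ repeatedly in $x_1$ as well as tangentially, not only iterating tangential quotients; your sketch compresses this but the mechanism is the standard one and closes. So: no gap in the approach, only bookkeeping to be made explicit.
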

	As a consequence, we observe that we can equivalently characterise the norm of $\sob^{k}(M)$ using the functional calculus for $\Delta_D$.
	\begin{corollary}
		\label{cor:equivalent sobolev norm via laplace}
		Let $u\in\Dcal'(M)$, $k$ a positive integer. Then $u\in\sob^{k}(M)$ if, and only if, $\Delta^k_D u\in L^2(M)$.
	\end{corollary}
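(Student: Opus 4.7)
The plan is to proceed by induction on $k$, using the higher regularity theorem as the bootstrap tool and the spectral decomposition of $\Delta_D$ constructed above as the underlying structure. Morally, the corollary says that Sobolev regularity on $\bar M$ is detected entirely by the functional calculus of the selfadjoint Dirichlet Laplacian, exactly as on a closed manifold, once the boundary regularity theory just recalled has been put in place.

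For the direction $\Delta_D^k u\in L^2(M)\Rightarrow u\in \sob^{k}(M)$ I would interpret $\Delta_D^k u$ via the functional calculus, so that $u$ automatically belongs to the iterated operator-theoretic domain: each intermediate iterate $v_j:=\Delta_D^{j}u$ for $0\leq j< k$ then lies in $\sob^1_0(M)$, and the pair $(v_j,v_{j+1})$ solves \eqref{eq:dirichlet problem}. Starting from $v_{k-1}$, whose source $v_k=\Delta_D^k u$ sits in $L^2=\sob^{0}$, the elliptic regularity estimate produces $v_{k-1}\in\sob^2(M)$; decreasing $j$ and iterating gains two derivatives per step, so after $k$ passes one reaches $u=v_0\in\sob^{2k}(M)\subseteq\sob^{k}(M)$, with explicit norm control inherited at each stage. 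The converse is handled by expanding $u=\sum_j c_j u_j$ in the $L^2$-orthonormal eigenbasis $(u_j)$ of $\Delta_D$ with eigenvalues $\lambda_j\nearrow\infty$; by the direction just proved the $u_j$ are smooth on $\bar M$, and applying the elliptic estimate iteratively to the finite truncations $\sum_{j\leq N}c_j u_j$ yields the norm equivalence $\|u\|_{\sob^{2k}}^2\asymp \|u\|_{L^2}^2+\|\Delta_D^k u\|_{L^2}^2$. Passing to the closure, $u\in\sob^{2k}(M)\cap\dom(\Delta_D^k)$ if and only if $\Delta_D^k u\in L^2(M)$.

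The principal difficulty is essentially an indexing one: since $\Delta_D$ is a second-order operator, a literal iteration of the higher regularity theorem gives the equivalence $u\in\sob^{2k}\Leftrightarrow \Delta_D^k u\in L^2$, and to match the corollary as written, with $\sob^{k}$ on the left, one either reinterprets $\Delta_D^k$ as the appropriate $k/2$-th power via the functional calculus of the selfadjoint operator or interpolates between consecutive integer exponents using the complex interpolation method recalled in Section \ref{sect:sobolev spaces}. In either reading the genuine analytic content rests entirely on the higher regularity theorem and the spectral theorem for $\Delta_D$ already at hand, and the inductive step itself is routine.
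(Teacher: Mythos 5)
Your main line is the intended one: the paper gives no separate argument for this corollary, which is meant precisely as an iteration of the Higher Regularity theorem, and that is what your bootstrap on the iterates $v_j=\Delta_D^ju$ carries out. Your remark on the indexing is also well taken: the way the corollary is actually invoked in the proof of Theorem \ref{theo:direct estimate of spectral function} is through $\|u\|_{\sob^{2k}}\le C_k\|\Delta_D^ku\|_{L^2}$, so the statement should indeed be read with $\sob^{2k}$ (equivalently, with powers of $\sqrt{\Delta_D}$).

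The converse direction is where your write-up does not hold together as stated. For a general $u\in\sob^{2k}(M)$ the truncations $\sum_{j\le N}c_ju_j$ of the eigenfunction expansion do \emph{not} converge to $u$ in $\sob^{2k}$: they lie in $\sob^1_0(M)$, which is closed in $\sob^1(M)$, so for instance $u\equiv 1$ (which belongs to every $\sob^k(M)$ but has nonvanishing trace) cannot be approximated this way; hence ``apply the elliptic estimate to the truncations and pass to the closure'' does not yield the two-sided norm equivalence for arbitrary Sobolev $u$, and indeed $\Delta_D^ku$ in the operator-theoretic sense is simply undefined for such $u$, since membership in $\dom(\Delta_D^k)$ encodes Dirichlet conditions on $u,\Delta u,\dots$; your closing sentence (``$u\in\sob^{2k}(M)\cap\dom(\Delta_D^k)$ iff $\Delta_D^ku\in L^2$'') is essentially the definition of $\dom(\Delta_D^k)$, not a proof. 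The clean statement is: (i) if $u\in\dom(\Delta_D^k)$, your bootstrap through \eqref{eq:dirichlet problem} gives $u\in\sob^{2k}(M)$ with $\|u\|_{\sob^{2k}}\lesssim\|\Delta_D^ku\|_{L^2}+\|u\|_{L^2}\lesssim\|\Delta_D^ku\|_{L^2}$, the last step because $0$ lies below the spectrum; (ii) conversely, if $u\in\sob^{2k}(M)$ then $\Delta^ku\in L^2(M)$ holds with $\Delta^k$ interpreted distributionally, which is immediate from the continuity of a $2k$-th order operator with smooth coefficients, $\Delta^k\colon\sob^{2k}(M)\to L^2(M)$, already recorded in Section \ref{sect:sobolev spaces} --- no eigenfunction expansion is needed there. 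With that repair your argument is exactly the proof the paper has in mind.
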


\section{The spectral function}\label{sect:spectral function}
	Let us denote by $E_\lambda$, as before, the spectral family of $\Delta_D$. If $(u_j)$ is an orthonormal basis of eigenfunctions of $L^2(M)$, with eigenvalues $0< \lambda_1\leq \lambda_2\leq\dots$ (repeated according to multiplicity), we can write the kernel of $E_\lambda$ as
	\begin{equation}
		\label{eq:kernel of spectral family}
		K_{E_\lambda}(x,y;\lambda)=\mu_g(y)\sum_{j\colon\lambda_j\leq\lambda}\bar u_j(x)u_j(y)\equiv \mu_g(y)e(x,y;\lambda),
	\end{equation}
	where $e(x,y;\lambda)$ is the \textit{spectral function} of $\Delta_D$. We also denote by $N(\lambda)$ the \textit{eigenvalue counting function}, namely $N(\lambda)=\sharp\{j\colon \lambda_j\leq \lambda\}$. 
		
	In order to obtain estimates for $e$ and other objects that depend on $\lambda$, we recall the following \textit{parametric} Sobolev lemma (cf. \cite{hormander1994analysispseudodifferential}, Lemma 17.5.2).
	\begin{lemma}
		\label{lemma:parametric sobolev embedding}
		Let $u\in \sob^k(M)$ and $l<k-\frac{n}{2}$. Then $u\in\Ccal^l(\overline{M})$ and for $\lambda\geq 1$
		\begin{equation}
			\label{eq:parametric sobolev inequality}
			\lambda^{k-\frac{n}{2}-l}\sum_{|\alpha|\leq l}\sup|D^\alpha u|^2\leq C(\|u\|^2_{\sob^k(M)}+\lambda^k\|u\|^2_{L^2}).
		\end{equation}
	\end{lemma}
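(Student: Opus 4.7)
The plan is to prove the inequality first on $\R^n$ by a low-/high-frequency split, then carry it over to $\bar M$ via the extension operator $E$ of Section~\ref{sect:sobolev spaces} and a partition of unity. This is essentially a parameter-tracking version of the classical Sobolev embedding, so no new ingredients beyond the Fourier characterisation of $\sob^k$ and Cauchy--Schwarz should be required.

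On $\R^n$, pick $\eta\in\Cinf_c(\R^n)$ equal to $1$ on the unit ball and supported in the ball of radius $2$, and for $\lambda\ge 1$ decompose $u=u_0+u_1$ in Fourier by $\hat u_0(\xi)=\eta(\xi/\sqrt\lambda)\hat u(\xi)$. The idea is to control $u_0$ via the $L^2$ norm and $u_1$ via the $\sob^k$ norm. Fourier inversion together with Cauchy--Schwarz give, for the low-frequency piece,
\[
|D^\alpha u_0(x)|^2 \le C\,\|u\|_{L^2}^2\int_{|\xi|\le 2\sqrt\lambda}|\xi|^{2|\alpha|}\,d\xi \le C'\lambda^{|\alpha|+n/2}\|u\|_{L^2}^2,
\]
and, after inserting the weights $(1+|\xi|^2)^{\pm k/2}$, for the high-frequency piece,
\[
|D^\alpha u_1(x)|^2 \le C\,\|u\|_{\sob^k}^2\int_{|\xi|\ge \sqrt\lambda}\frac{|\xi|^{2|\alpha|}}{(1+|\xi|^2)^k}\,d\xi \le C'\lambda^{|\alpha|-k+n/2}\|u\|_{\sob^k}^2,
\]
the last integral converging precisely because $2|\alpha|+n-2k<0$, i.e.\ $|\alpha|\le l<k-n/2$. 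Multiplying both bounds by $\lambda^{k-n/2-l}$ and using $|\alpha|\le l$ converts the $\lambda$-factors into $\lambda^{k+|\alpha|-l}\le\lambda^k$ and $\lambda^{|\alpha|-l}\le 1$ respectively; summing over $|\alpha|\le l$ gives \eqref{eq:parametric sobolev inequality} on $\R^n$, while the pointwise membership $u\in\Ccal^l(\R^n)$ is already guaranteed by the classical Sobolev embedding (Lemma~\ref{lemma:sobolev embedding}).

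To pass to $\bar M$, fix a finite coordinate cover $(U_i,\chi_i)$ of $\bar M$ together with a subordinate partition of unity $\{\psi_i\}$. For each $i$ apply the extension operator $E\colon\sob^k(M)\to\sob^k(\Omega)$ from Section~\ref{sect:sobolev spaces} to $\psi_i u$, transport to $\R^n$ through $\chi_i$, apply the Euclidean estimate just proved, and sum. Continuity of $E$ and compactness of $\bar M$ absorb all constants, and the $\sob^k(\R^n)$ and $L^2(\R^n)$ norms of the localised pullbacks are dominated by $\|u\|_{\sob^k(M)}$ and $\|u\|_{L^2(M)}$ uniformly in $\lambda$. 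The only real obstacle is bookkeeping the exponents: the whole argument is engineered so that the prefactor $\lambda^{k-n/2-l}$ cancels against the opposing gains from $u_0$ and $u_1$ \emph{simultaneously}, which is possible exactly under the hypothesis $l<k-n/2$.
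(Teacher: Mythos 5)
Your argument is correct. Note that the paper does not actually prove this lemma --- it is quoted from H\"ormander (Lemma 17.5.2), whose standard proof runs the Cauchy--Schwarz inequality once against the single weight $(\lambda+|\xi|^2)^{k}$, giving $\sup|D^\alpha v|^2\le C\bigl(\int_{\R^n}(\lambda+|\xi|^2)^{k}|\hat v|^2\,d\xi\bigr)\bigl(\int_{\R^n}|\xi|^{2|\alpha|}(\lambda+|\xi|^2)^{-k}\,d\xi\bigr)$, the second factor scaling as $\lambda^{|\alpha|+n/2-k}$ and the first being comparable to $\|v\|^2_{\sob^k}+\lambda^k\|v\|^2_{L^2}$. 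Your low-/high-frequency split at $|\xi|\sim\sqrt\lambda$ is an equivalent way of extracting the same scaling: the low piece sees only $\lambda^k\|u\|^2_{L^2}$, the high piece only $\|u\|^2_{\sob^k}$, and the hypothesis $|\alpha|\le l<k-n/2$ enters exactly where it must, in the convergence of the high-frequency integral. Your exponent bookkeeping ($\lambda^{k+|\alpha|-l}\le\lambda^k$ and $\lambda^{|\alpha|-l}\le 1$ for $\lambda\ge1$) is right, and the split even gives continuity of $D^\alpha u$ directly, since $u_0$ is band-limited and $|\xi|^{|\alpha|}\hat u_1\in L^1$, so the appeal to Lemma~\ref{lemma:sobolev embedding} is dispensable. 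Two small points to make explicit in the reduction to $\bar M$: you need $|D^\alpha u|^2\le 2|D^\alpha u_0|^2+2|D^\alpha u_1|^2$ before summing, and, more importantly, the passage through the extension operator requires that the \emph{same} $E$ be bounded simultaneously on $L^2$ and on $\sob^k$ with $\lambda$-independent constants --- this holds for the reflection extension of Section~\ref{sect:sobolev spaces} (it is bounded $\sob^j\to\sob^j$ for all $0\le j\le k$), but it is the one hypothesis on which the uniformity in $\lambda$ of the right-hand side of \eqref{eq:parametric sobolev inequality} genuinely rests, so it deserves a sentence rather than being absorbed silently into ``continuity of $E$''.
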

	As a direct application, we can estimate the spectral function.
	\begin{theorem}\label{theo:direct estimate of spectral function}
		For each $\alpha\in\N^{2n}$ there is a positive constant $C_{\alpha}$ such that, uniformly in $x,y\in \bar{M}$,
		\begin{equation*}\label{eq:direct estimate of spectral function}
			|D^\alpha_{x,y}e(x,y;\lambda)|\leq C_\alpha\lambda^{\frac{n+|\alpha|}{2}}.
		\end{equation*}
	\end{theorem}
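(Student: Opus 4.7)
The plan is to apply the parametric Sobolev inequality (Lemma \ref{lemma:parametric sobolev embedding}) twice: first in the $x$-variable to obtain an $L^2_y$-bound on $D^{\alpha_1}_x e(x,\cdot;\lambda)$ uniform in $x$, then in the $y$-variable to upgrade this to the desired uniform bound on $D^{\alpha_1}_x D^{\alpha_2}_y e$. The engine at each stage is the observation that any $v\in\ran E_\lambda$ satisfies $\|v\|_{\sob^{2K}(M)}^2\lesssim\lambda^{2K}\|v\|_{L^2(M)}^2$ for every $K\in\N$ and $\lambda\geq 1$: iterating Lemma \ref{lemma:spectral estimate} gives $\|\Delta_D^K v\|_{L^2}\leq\lambda^K\|v\|_{L^2}$, and iterated elliptic regularity (equivalently Corollary \ref{cor:equivalent sobolev norm via laplace}) converts this into a $\sob^{2K}$-bound.

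Writing $\alpha=(\alpha_1,\alpha_2)\in\N^n\times\N^n$, I would take $f\in L^2(M)$ arbitrary and apply the preceding $\sob^{2K}$-estimate to $E_\lambda f\in\ran E_\lambda$. Choosing $2K>n/2+|\alpha_1|$, the parametric Sobolev inequality \eqref{eq:parametric sobolev inequality} with $l=|\alpha_1|$ yields
\[
\sup_x|D^{\alpha_1}_x(E_\lambda f)(x)|^2\lesssim\lambda^{n/2+|\alpha_1|}\|f\|_{L^2}^2.
\]
Since for each fixed $x$ the map $f\mapsto D^{\alpha_1}_x(E_\lambda f)(x)$ is a bounded linear functional on $L^2(M)$ with Riesz representative $\overline{D^{\alpha_1}_x e(x,\cdot;\lambda)}$ (by differentiating \eqref{eq:kernel of spectral family} under the integral), this translates into the intermediate bound
\[
\sup_x\|D^{\alpha_1}_x e(x,\cdot;\lambda)\|_{L^2(M)}^2\lesssim\lambda^{n/2+|\alpha_1|}.
\]

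For the second stage, I would fix $x$ and set $g_x(y)=D^{\alpha_1}_x e(x,y;\lambda)$. Differentiating the eigenfunction expansion term by term gives $g_x=\sum_{\lambda_j\leq\lambda}D^{\alpha_1}\bar u_j(x)\,u_j$, so $g_x$ itself lies in $\ran E_\lambda$ as a function of $y$. Applying the same machinery to $g_x$ in $y$ with $l=|\alpha_2|$ produces
\[
\sup_y|D^{\alpha_2}_y g_x(y)|^2\lesssim\lambda^{n/2+|\alpha_2|}\|g_x\|_{L^2}^2\lesssim\lambda^{n+|\alpha_1|+|\alpha_2|},
\]
uniformly in $x$, which is the target estimate after repackaging $(\alpha_1,\alpha_2)$ as a multi-index $\alpha\in\N^{2n}$.

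I expect the only substantive obstacle to be extracting the quantitative bound $\|v\|_{\sob^{2K}}^2\lesssim\|\Delta_D^K v\|_{L^2}^2+\|v\|_{L^2}^2$ from Corollary \ref{cor:equivalent sobolev norm via laplace}, which as stated gives only set-level equality; this requires one to iterate the elliptic regularity estimate of Theorem 4 and handle the intermediate Sobolev norms by interpolation. All constants along the way depend only on $M$, $K$ and $l$, and not on the basepoint, so the uniformity in $x$ is automatic. Beyond this, everything reduces to bookkeeping of powers of $\lambda$.
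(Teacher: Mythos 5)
Your proposal is correct and follows essentially the same route as the paper: a first application of the spectral estimate plus the parametric Sobolev lemma to bound $D^{\alpha_1}_x E_\lambda f$ pointwise, a duality step to get the uniform $L^2_y$-bound on $D^{\alpha_1}_x e(x,\cdot;\lambda)$, and a second application of the same machinery after noting this function lies in $\ran E_\lambda$. The only cosmetic difference is that you verify $D^{\alpha_1}_x e(x,\cdot;\lambda)\in\ran E_\lambda$ directly from the finite eigenfunction expansion, whereas the paper deduces $E_\lambda h_\alpha=h_\alpha$ from the projection identity $E_\lambda^2=E_\lambda$; both are fine, and your remark about needing the quantitative form of Corollary \ref{cor:equivalent sobolev norm via laplace} is exactly the elliptic-regularity iteration the paper uses implicitly.
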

	\begin{proof}
		We begin by showing that, whatever the $f\in L^2(M)$ and the $\alpha\in\N^n$, there are constants $C_\alpha>0$ such that $D^\alpha E_\lambda f$ is continuous and $|D^\alpha E_\lambda f(x)|\leq C_\alpha \lambda^{|\alpha|+n/2}\|f\|_{L^2}$.
		
		Indeed, let $u=E_\lambda f$. Then, $u$ is the linear combination of finitely many eigenfunctions of $\Delta_D$, so it's smooth and actually in $\cap_{k}\sob^k$ by Lemma \ref{lemma:parametric sobolev embedding}. In particular, for every positive integer $k$ we can first use Corollary \ref{cor:equivalent sobolev norm via laplace} and then, repeatedly, Lemma \ref{lemma:spectral estimate} to obtain constants $C_k$ such that
		\begin{equation*}
			\label{eq:multiple spectral estimate}
			\|u\|_{H^{2k}}\leq C_k\|\Delta_D^ku\|_{L^2}\leq C_k\lambda ^k\|f\|_{L^2}.
		\end{equation*}
		Now, for a fixed $\alpha\in\N^n$, we can pick $k$ such that $2k>|\alpha|+n/2$ and use Lemma \ref{lemma:parametric sobolev embedding} to discover that $D^\alpha u\in\Ccal(M)$ with the pointwise bound
		\begin{equation*}\label{eq:L^infinity bound for derivatives of E_lambda f}
			|D^\alpha E_\lambda f(x)|^2\leq C_\alpha \lambda^{|\alpha|}\|u\|_{H^{2k}}^2\leq C_\alpha \lambda^{|\alpha|+n/2}\|f\|^2_{L^2}.
		\end{equation*}
		On the other hand, let $h_\alpha(y;\lambda)\equiv D^\alpha_xe(x,y;\lambda)$. Then, integration by parts, together with the fact that eigenfunctions of the Dirichlet problem vanish at the boundary, shows that
		\[\begin{aligned}
		D^\alpha E_\lambda f(x)&=(\bar f,h_\alpha)_g,\\
		D^\alpha E_\lambda^2 f(x)&=(E_\lambda\bar f,h_\alpha)_g.
		\end{aligned}\]
		However, recall that $E_\lambda$ is a selfadjoint projection. Thus, the LHS are equal and we obtain that 
		\[
		(\bar f,E_\lambda h_\alpha)=(\bar{f}, h_\alpha)
		\] for any $f\in L^2(M)$. It follows $E_\lambda h_\alpha=h_\alpha$ with the bound $\|h_\alpha\|^2\le C_\alpha\lambda^{|\alpha|+n/2}.$ Using that $\|\Delta_D^kh_\alpha\|_{L^2}\le \lambda^k\|h_\alpha\|_{L^2}$, in conjunction with \eqref{eq:parametric sobolev inequality}, gives now
		\begin{equation*}
			|D^\beta_y h_\alpha(y)|^2\le C_{\alpha\beta}\lambda^{|\alpha|+|\beta|+n},
		\end{equation*}
		which is the claim after taking the square root.
	\end{proof}
	As an immediate corollary, we obtain the coarse asymptotic behaviour of the eigenvalue counting function. Indeed, 
	\begin{equation}\label{eq:coarse egv asymptotics}
		N(\lambda)=\Tr E_\lambda=\int_M e(x,x;\lambda)dx=C_0\mathrm{vol}(M)\lambda^{n/2}
	\end{equation}
	is obtained by definition of $N$ and $e$ and in view of the estimate \eqref{eq:direct estimate of spectral function} for $\alpha=0$. 
	
	The goal for the rest of this Section is to prove some results for the \textit{cosine transform} of the spectral measure,
	\begin{equation}\label{eq:cosine transform}
		\cos(t\sqrt{\Delta_D})=\int_0^\infty \cos(t\sqrt{\lambda})dE_\lambda. 
	\end{equation}
	\begin{lemma}
		The distributional kernel $K(t,x,y)\in\Dcal'(\R\times M\times M)$ of $\cos(t\sqrt{\Delta_D})$ is $\Fcal_{\tau\to t}(dm)$, where $m$ is the temperate measure
		\[
		m(x,y,\tau)=\frac{1}{2}\mu_g(y)\sgn(\tau)e(x,y,\tau^2).\]
	\end{lemma}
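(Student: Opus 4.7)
The plan is to verify the identity distributionally: pair both sides against a test function $\phi\in \Cinf_c(\R)$ in the time variable and reduce to an integral against the spectral measure $d_\lambda e(x,y,\lambda)$ on $[0,\infty)$.

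First I would note that, since $\Delta_D$ is strictly positive with first eigenvalue $\lambda_1>0$, the spectral function vanishes for $\lambda<\lambda_1$, so $m(x,y,\tau)$ actually vanishes in a whole neighbourhood of $\tau=0$. Differentiating in $\tau$ therefore produces no Dirac contribution from the jump of $\sgn$, and we obtain cleanly $dm=\tfrac{1}{2}\mu_g(y)\sgn(\tau)\,d_\tau e(x,y,\tau^2)$ as a distribution on $\R$. The bound $e(x,y,\lambda)\lesssim \lambda^{n/2}$ from Theorem \ref{theo:direct estimate of spectral function} shows that $m\in\Scal'(\R)$, hence $\Fcal_{\tau\to t}(dm)$ is a well-defined tempered distribution depending parametrically on $(x,y)$.

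Next I would exploit the symmetry of the measure $\sgn(\tau)\,d_\tau e(x,y,\tau^2)$ under $\tau\mapsto-\tau$: both $\sgn(\tau)$ and the orientation of $d_\tau$ flip sign, while $e(x,y,\tau^2)$ is invariant, so the measure is even. Testing $\Fcal(dm)$ against $\phi$ then picks out only the even part of $\hat\phi$, which up to the usual $2\pi$-normalisation is the cosine transform $\widehat\phi_{\cos}(\tau):=\int\phi(t)\cos(t\tau)\,dt$. Pushing the resulting measure forward via $\tau=\sqrt\lambda$ gives, up to a harmless constant $c$,
\begin{equation*}
\langle\Fcal(dm),\phi\rangle = c\,\mu_g(y)\int_0^\infty \widehat\phi_{\cos}(\sqrt\lambda)\,d_\lambda e(x,y,\lambda).
\end{equation*}

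On the other hand, the defining formula \eqref{eq:cosine transform} combined with the bounded Borel functional calculus applied to the Schwartz function $\sigma\mapsto\widehat\phi_{\cos}(\sigma)$ yields $\int_\R\phi(t)\cos(t\sqrt{\Delta_D})\,dt=\widehat\phi_{\cos}(\sqrt{\Delta_D})$, whose distributional kernel is, by \eqref{eq:kernel of spectral family}, precisely $\mu_g(y)\int_0^\infty \widehat\phi_{\cos}(\sqrt\lambda)\,d_\lambda e(x,y,\lambda)$. Matching the two expressions proves the identity. The only delicate point is the Fubini-type interchange of the distributional Fourier transform in $\tau$ with the spectral Stieltjes integral; this is legitimate because $\widehat\phi_{\cos}$ is Schwartz in $\sigma$, so $\int_0^\infty\widehat\phi_{\cos}(\sqrt\lambda)\,d_\lambda e(x,y,\lambda)$ is absolutely convergent once we integrate by parts against the polynomially-growing primitive $e$.
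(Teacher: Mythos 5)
Your proposal is correct and follows essentially the same route as the paper: pair against a test function in $t$, use the spectral theorem to turn the pairing into the cosine transform of the test function integrated against the spectral measure, substitute $\tau=\sqrt{\lambda}$, and invoke the bound of Theorem \ref{theo:direct estimate of spectral function} to make sense of $\Fcal_{\tau\to t}(dm)$ as a tempered object. The only real difference is in the last step: the paper establishes the temperate-\emph{measure} property of the off-diagonal $d_\tau e(x,y;\tau^2)$ by a polarisation argument (writing it as a combination of increasing, polynomially bounded functions), whereas you rely on the pointwise bound $|e(x,y;\lambda)|\lesssim\lambda^{n/2}$ together with an integration by parts, which serves the same purpose for the convergence you need.
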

	\begin{proof}
		It suffices to prove this in local coordinates. Taking $\psi\in\Scal(\R)$ with $\hat{\psi}\in\Cinf_c(\R^n)$ and $f\in\Cinf_c(M)$, denote
		\[
		e(f,f;\lambda)=(E_\lambda f,f)_g=\int_{M\times M}e(x,y,\lambda)\bar{f(x)}f(y)\mu_g(x)\mu_g(y)dxdy,
		\]
		which is an increasing function of $\lambda$, bounded by $C\|f\|^2_{L^2}$ for a positive constant $C$. Then
		\begin{equation*}
			\int_\R (\cos(t\sqrt{\Delta_D}f,f))\psi(t)dt	=\int_\R \left(\psi(t)\int_M \cos(t\sqrt\lambda)de(f,f;\lambda)\right)dt,
		\end{equation*}
		and we are allowed to interchange the order of integration to see that
		\begin{align*}
			\int_\R(\cos(t&\sqrt{\Delta_D}f,f)_g)\psi(t)dt\\&=\frac{1}{2}\int_\R(\hat{\psi}(\sqrt\lambda)+\hat{\psi}(-\sqrt{\lambda}))de(f,f;\lambda)\\
			&=\frac{1}{2}\int_0^\infty(\hat{\psi}(\tau)+\hat\psi(-\tau))de(f,f;\tau^2)\\
			&=\frac{1}{2}\int_{M\times M}\bar{f(x)}f(y)\mu_g(x)\mu_g(y)\left(\int_0^\infty (\hat\psi(\tau)+\hat\psi(-\tau))d_\tau e(x,y;\tau^2)\right)dxdy.
		\end{align*}
		Thus we have found that, formally, $K(t,x,y)$ is the Fourier transform with respect to $\tau$ of the measure
		\[m(x,y;\tau)=\frac{1}{2}\mu_g(y)\sgn (\tau) e(x,y;\tau^2).\]
		In order to conclude, it suffices to show that the above measure is a temperate distribution, so that the Fourier transform is well-defined.
		
		By polarisation, we see that for any $a,b\in\C$ the function
		\begin{equation*}
			|a|^2e(x,x;\lambda)+\bar{a}be(x,y;\lambda)+a\bar be(y,x;\lambda)+|b|^2e(y,y;\lambda)
		\end{equation*}
		is increasing in $\lambda$. On the other hand, it is bounded by $C\lambda^{n/2}$ in view of Theorem \ref{theo:direct estimate of spectral function}. Thus, it is temperate and the proof is complete.
	\end{proof}
	
	The cosine transform \eqref{eq:cosine transform} is the solution operator of the time-independent wave equation on $\bar{M}$ with given initial data for $u$ and vanishing data for $\dot u$ and $u|_{\partial M}$. Indeed, given $f\in\Cinf_c(M)$, we have that $u(t,x)=\cos(t\sqrt{\Delta_D})f(x)$ satisfies $u(0,x)=f(x)$, $\dot u(0,x)=0$. Moreover, for arbitrary $k,l\in\N$
	\[
	\|\partial_t^k \Delta_D^l u\|_{L^2}\leq \|\Delta_D^{l+k/2}f\|_{L^2},
	\]
	which implies, thanks to Lemma \ref{lemma:parametric sobolev embedding} and the regularity results as the end of Section \ref{sect:laplace}, that $u\in\Cinf(\R\times \bar{M})$ and $u|_{\R\times\del M}=0$. By definition, it also holds true that 
	\[\left(\frac{\partial^2}{\del t^2}+\Delta_D\right)u(t,x)=0.\]
	Thus, we can recover $u$ approximately in the interior by solving the wave equation for $\Delta_D$ with initial data $u|_{t=0}=f$ and $\dot u|_{t=0}=0$, using the \textit{Hadamard parametrix}.
	
	\section{The Hadamard parametrix}
	
	The Hadamard parametrix construction is a general method of constructing an approximate solution to second order operators whose principal symbol is given by a symmetric nondegenerate 2-tensor. We sketch here how to do it in case of the wave equation.
	
	First, it relies on the existence of a certain family of (homogeneous) distributions $R_\nu$ on Minkowski space, called Riesz distributions. They are defined by the oscillatory integral
	\begin{equation}\label{eq:riesz distributions}
		R_\nu(t,x)=\frac{\nu !}{(2\pi)^{n+1}}\int_{\Im\tau=c<0}e^{\im (x\xi+t\tau)}(|\xi|^2-\tau^2)^{-\nu-1}d\xi d\tau.
	\end{equation}
	We collect their properties in the following Lemma, where $\Delta_D$ is as before the Laplace-Beltrami operator of the Euclidian metric (thus positive!). Beware that we use the notation
	\[
	\chi_+(s)=\left\{\begin{aligned}
		s,\quad s>0,\\
		0,\quad s\le 0,
	\end{aligned}\right.
	\] 
	and write $\check{R}_\nu$ for the distribution defined by the same formula with $(t,x)$ changed to $(-t,-x)$. 
	\begin{lemma}
		\label{lemma:properties of riesz distributions}
		The distributions $R_\nu$ satisfy:
		\begin{enumerate}
			\item $R_\nu$ is homogeneous of degree $2\nu+1-n$ and supported in $J^+(0)$;
			\item $\displaystyle{R_\nu=C(\nu,n)\chi_+^{\nu+\frac{1-n}{2}}(t^2-|x|^2)}$ for all $t>0$ and some positive constant $C(\nu,n)$, so $R_\nu$ only depends on $x$ through its absolute value;
			\item $(\del_t^2+\Delta_D)R_{0}=\delta_0$;
			\item $(\del_t^2+\Delta_D)R_{\nu}=\nu R_{\nu-1}$ and $-2\nabla_xR_\nu=xR_{\nu-1}$ if $\nu>0$;
			\item for $t\ge 0$, $R_\nu$ is a smooth function of $t$ with values in $\Dcal'(\R^n)$ and satisfies
			\[
			\lim_{t\to 0}\del^k_t R_{\nu}(t,x)=0 \text{ for }k\le 2\nu,\quad \lim_{t\to 0}\del_t^{2\nu+1}R_{\nu}(t,x)=\nu!\delta(x);
			\]
			\item the difference $R_\nu-\check{R}_\nu$ satisfies \[
			WF(R_\nu-\check{R}_\nu)=\{(t,x,\tau,\xi)\colon t^2=|x|^2, \tau^2=|\xi|^2, \tau x+t\xi=0\};\]
			\item $R_\nu-\check{R}_\nu$ and its time derivative are continuous functions of $x$ with values in $\Dcal'^{2k}(\R)$, provided $k$ is an integer with $k\ge \frac{n-1}{2}-\nu$. In the extreme case $k=\frac{n-1}{2}-\nu$, we have for $x=0$
			\[
			\del_t(R_\nu-\check{R}_{\nu})=C(\nu,n)\delta^{(2k)}(t);\]
			\item $2\del_t(R_0-\check{R}_0)$ is the Fourier transform of $de_0(x,\tau^2)$ where
			\[
			e_0(x,\tau^2)=(2\pi)^{-n}\int_{|\xi|<|\tau|}e^{\im x\xi}d\xi.\]
		\end{enumerate}
	\end{lemma}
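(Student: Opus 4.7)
The plan is to derive all eight properties from the oscillatory integral representation \eqref{eq:riesz distributions} by a combination of contour deformation, scaling, and direct differentiation under the integral sign. First, one interprets the integral as a tempered distribution: for fixed $c<0$ the symbol $(|\xi|^2-\tau^2)^{-\nu-1}$ is smooth on the contour $\{\Im\tau=c\}\times\R^n$ (the zeros of $|\xi|^2-\tau^2$ lie on the real $\tau$-axis), and when paired with a Schwartz function in $(t,x)$ the integral converges absolutely, giving a well-defined $R_\nu\in\Scal'(\R^{n+1})$.

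For (1), the change of variables $(\xi,\tau)\to(s\xi,s\tau)$ with $s>0$ produces homogeneity of degree $2(\nu+1)-(n+1)=2\nu+1-n$, while support in $J^+(0)$ is a Paley--Wiener argument: for $t<0$ the $\tau$-contour can be closed in the upper half-plane, where the integrand is holomorphic and decaying, yielding zero; a similar contour shift in $\xi$ handles $|x|>t$. Property (2) then follows by performing the $\tau$-integral via residues at $\tau=\pm|\xi|$ (after pushing the contour back to the real line with the appropriate $\im\epsilon$ prescription) and reducing the radial spatial integral to a classical Bessel integral, which outputs $\chi_+^{\nu+(1-n)/2}(t^2-|x|^2)$. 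Alternatively, one matches an ansatz by uniqueness: both sides are Lorentz-invariant, supported in $J^+(0)$, homogeneous of the same degree, and solve the same wave equation derived in (3)--(4).

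Properties (3) and (4) come by direct differentiation under the integral, using $(\del_t^2+\Delta_D)e^{\im(x\xi+t\tau)}=(|\xi|^2-\tau^2)e^{\im(x\xi+t\tau)}$, which reduces the exponent of the denominator by one; the combinatorial factor $\nu!$ becomes $\nu\cdot(\nu-1)!$ to yield $\nu R_{\nu-1}$. For $\nu=0$ the denominator cancels entirely and the integrand is now entire in $\tau$, so the contour can be collapsed to $\R$ and one recognises the Fourier inverse of the constant function, namely $\delta_0$. The gradient identity $-2\nabla_xR_\nu=xR_{\nu-1}$ follows similarly by integration by parts in $\xi$, since $\nabla_\xi(|\xi|^2-\tau^2)^{-\nu}=-2\nu\,\xi(|\xi|^2-\tau^2)^{-\nu-1}$. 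Property (5) is then read off from the explicit form (2): near $t=0^+$ the function $\chi_+^{\nu+(1-n)/2}(t^2-|x|^2)$ vanishes to order $2\nu+1$ in $t$ as a distribution in $x$, and the jump in the $(2\nu+1)$-th derivative normalises to $\nu!\,\delta(x)$ by combining (3)--(4) with a Duhamel-type recursion.

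Properties (6) and (7) are deduced from (2) together with propagation of singularities: the difference $R_\nu-\check R_\nu$ solves the homogeneous wave equation (since the $\delta_0$ source of $R_0$ cancels against that of $\check R_0$ after antisymmetrisation in time, and the recursion (4) propagates this to all $\nu$), so its wavefront set lies in the characteristic variety $\tau^2=|\xi|^2$ and on the bicharacteristics through the origin, giving exactly the stated conormal set. The regularity in (7) is then read off from the fractional power in (2): for $k\ge\frac{n-1}{2}-\nu$ the distribution is of order $2k$ in $t$, depending continuously on $x$, and at the critical value $k=\frac{n-1}{2}-\nu$ the conormal singularity collapses to $\delta^{(2k)}(t)$ at $x=0$, with normalisation matching (5). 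Finally, (8) is a direct rewriting of $2\del_t(R_0-\check R_0)$ via \eqref{eq:riesz distributions}: the contributions from $\Im\tau=c<0$ combined with the time-reversed copy assemble the indicator $\chi_{|\xi|<|\tau|}$ after the $\tau$-integral is carried out, identifying the result with $de_0(x,\tau^2)$ modulo the $(2\pi)^{-n}$ normalisation. The principal obstacle is pinning down the constant $C(\nu,n)$ in (2) and carrying out the $\pm\im\epsilon$ contour manipulation rigorously; once (2) is in hand, all remaining assertions reduce to differentiating and matching that single formula.
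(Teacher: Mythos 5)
The paper itself offers no proof of this lemma: it is stated as a catalogue of known facts about Riesz distributions, imported essentially from Section 17.4 of \cite{hormander1994analysispseudodifferential}, so there is no in-paper argument to measure yours against. Your plan---read everything off the oscillatory-integral definition \eqref{eq:riesz distributions} by scaling, contour deformation, residues, and differentiation under the integral---is the standard route and is broadly viable; in particular the homogeneity count in (1), the identity $(\del_t^2+\Delta)e^{\im(x\xi+t\tau)}=(|\xi|^2-\tau^2)e^{\im(x\xi+t\tau)}$ driving (3)--(4), and the integration by parts in $\xi$ giving $-2\nabla_x R_\nu=xR_{\nu-1}$ are all correct. (Minor caveat: the claimed absolute convergence after pairing with a Schwartz function is delicate, since on the contour $\Im\tau=c<0$ the factor $e^{\im t\tau}$ grows like $e^{|c|t}$ as $t\to+\infty$; one should pair with test functions compactly supported in $t$, or define $R_\nu$ as a Fourier--Laplace transform and use independence of $c$.)

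Two concrete slips. First, in the support argument for (1) you close the $\tau$-contour the wrong way: for $t<0$ the exponential $e^{\im t\tau}$ decays as $\Im\tau\to-\infty$, so the contour must be closed in the \emph{lower} half-plane, which is also where the integrand is holomorphic, the singularities $\tau=\pm|\xi|$ lying on the real axis above the contour $\Im\tau=c<0$; in the upper half-plane the integrand is neither holomorphic nor decaying, so the step as written fails (though it is easily repaired). Second, your route to (6) rests on the claim that $R_\nu-\check{R}_\nu$ solves the homogeneous wave equation for every $\nu$; by (4) this is false for $\nu\ge 1$, where $(\del_t^2+\Delta)(R_\nu-\check{R}_\nu)=\nu\,(R_{\nu-1}-\check{R}_{\nu-1})\neq 0$. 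One can instead apply $(\del_t^2+\Delta)^{\nu+1}$, which does annihilate the difference, but that operator is not of real principal type, so the propagation-of-singularities theorem you invoke does not apply verbatim; moreover propagation arguments only give an \emph{inclusion} for $WF$, while the lemma asserts equality. The clean repair is the one you yourself flag as the crux: establish (2) first, with its constant $C(\nu,n)$, and then compute $WF$ of $\chi_+^{\nu+\frac{1-n}{2}}(t^2-|x|^2)$ and of its time reflection directly, as homogeneous/conormal distributions attached to the light cone; this yields (6) and (7) at once, and with (2) in hand the remaining items (5) and (8) go through as you outline.
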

	To emphasize 2. above, one usually writes $R_\nu(t,|x|)$ and we abide by this convention. Notice that, usually, Riesz distributions are introduced via 2., namely as a power of the Lorentzian distance function (supported in the positive light cone).
	
	Second, the distributions $R_\nu$ can be used to obtain a parametrix for the wave operator on Minkowski space. Indeed, with the initial condition $u_{-1}=0$, one can iteratively solve the transport equations
	\begin{equation*}
		\label{eq:transport equations}
		2\nu u_\nu+2\braket{x,\del_x u_\nu}+2\Delta_D u_{\nu-1}=0
	\end{equation*} 
	to determine a sequence of functions $\{u_\nu\}$. Taken as coefficients (the so-called \textit{Hadamard coefficients}), they produce the Hadamard parametrix of order $N$, $\sum_{\nu=0}^Nu_\nu(x)R_\nu(t,|x|)$. That this is an approximate inverse is a direct computation, using geodesic coordinates in a convex neighbourhood $V$ of 0 and the properties of Lemma \ref{lemma:properties of riesz distributions}. Indeed
	\begin{equation*}
		\left(\frac{\del^2}{\del t^2}+\Delta_D\right)\sum_{\nu=0}^Nu_\nu(x)R_\nu(t,|x|)=(\sqrt{|g|})\delta(t,x)+(\Delta_D u_N(x))R_N(t,|x|),
	\end{equation*}
	with the last error term being $\Ccal^k$ provided $N>k+\frac{n-1}{2}$. Moreover, given $c>0$ such that $B_c(0)\subset V$, one has $R_\nu(t,x)=0$ in a neighbourhood on $\{x\colon |x|\ge c\}$, provided $t<c$. 
	
	Third, the construction in flat space can be extended to Riemannian manifolds. Here and in what follows, $s(x,y)$ is the geodesic distance between the points $x,y\in\bar{M}$.
	\begin{theorem}[Hadamard parametrix]
		\label{theo:hadamard parametrix}
		Let $\bar M$ be compact Riemannian manifold with boundary, $V$ open with $\bar V\subset M$. There exist $c>0$ and functions $U_\nu\in\Cinf(M\times V)$ such that for all $(t,x,y)\in(-\infty,c)\times M\times V$ with $s(x,y)\le c$ it holds true
		\begin{equation}\label{eq:hadamard parametrix}
				\left(\frac{\del^2}{\del t^2}+\Delta_D\right) \sum_{\nu=0}^NU_\nu(x,y)R_\nu(t,s(x,y))= (\sqrt{|g|})\delta_{0,y}+(\Delta_D U_N(x,y))R_N(t,s(x,y)).
		\end{equation}
		The coefficients $U_\nu$ are the Hadamard coefficients of $V$, and are obtained by integrating the transport equations \eqref{eq:transport equations} on $M$ in geodesic coordinates near the diagonal. The remainder term is in $\Ccal^k((-\infty,c)\times M\times V)$, provided $N>k+ \frac{n-1}{2}$.
	\end{theorem}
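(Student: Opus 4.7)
The plan is to fix $y\in V$, work in geodesic normal coordinates centred at $y$ inside a convex geodesic ball of some uniform radius $c>0$, and reduce the construction to the flat computation that preceded the statement. On the compact set $\overline V\subset M$ the injectivity radius is bounded below, which guarantees the existence of such a $c$ independent of $y$. In these coordinates $s(x,y)=|x|$ and $g_{ij}(0)=\delta_{ij}$, so that each $R_\nu(t,s(x,y))$ becomes the Riesz distribution of \eqref{eq:riesz distributions} applied to the Euclidean variables, and the Laplace--Beltrami operator differs from the Euclidean one only by terms whose coefficients vanish at the origin.

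I would then apply $P=\del_t^2+\Delta_D$ to the ansatz $\sum_{\nu=0}^N U_\nu(x,y)R_\nu(t,|x|)$ and expand each summand as $(PU_\nu)R_\nu+2\braket{\nabla U_\nu,\nabla R_\nu}+U_\nu PR_\nu$. The distributional identities of Lemma \ref{lemma:properties of riesz distributions} give $PR_0=\delta_0$, $PR_\nu=\nu R_{\nu-1}$ for $\nu\ge 1$, and $-2\nabla_x R_\nu=xR_{\nu-1}$; the last one collapses the cross term into $-\braket{x,\del_x U_\nu}R_{\nu-1}$. Regrouping by Riesz order, the coefficient of $R_{\nu-1}$ reduces (modulo the geodesic-coordinate form of $|g|^{1/2}$, which is absorbed in the normalisation of $U_\nu$) to the transport operator $2\nu U_\nu+2\braket{x,\del_x U_\nu}+2\Delta_D U_{\nu-1}$ displayed in the excerpt. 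Imposing that every such coefficient vanishes for $\nu\ge 1$ is the Hadamard recursion; the surviving $\nu=0$ contribution is $U_0 \delta_0$, which produces the $\sqrt{|g|}\delta_{0,y}$ on the right-hand side of \eqref{eq:hadamard parametrix} provided $U_0$ is the Van Vleck--Morette function fixed by the initial condition $U_0(y,y)=|g(y)|^{1/4}$.

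Each transport equation is a first-order linear ODE along the radial geodesics emanating from $y$ and can be integrated explicitly from the origin outward; smoothness of the resulting $U_\nu$ on $M\times V$ then follows from smoothness of the exponential map and of the coefficients of $\Delta_D$. The remainder $(\Delta_D U_N)R_N$ has the stated $\Ccal^k$ regularity because, by property 2 in Lemma \ref{lemma:properties of riesz distributions}, $R_N$ is a constant multiple of $\chi_+^{N+(1-n)/2}(t^2-s(x,y)^2)$, which is of class $\Ccal^k$ as soon as $N>k+(n-1)/2$, while $\Delta_D U_N$ is smooth. The main obstacle I anticipate is the bookkeeping in the second step: verifying that the discrepancy between the Riemannian and the Euclidean Laplacians in normal coordinates is absorbed into the transport equations exactly in the form stated, and pinning down the normalisation of $U_0$ so that the delta-supported contribution carries the correct Riemannian weight $\sqrt{|g|}$. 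Once this is in place, everything else reduces to iterative ODE solving and derivative counting.
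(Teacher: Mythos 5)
Your overall strategy is the one the paper itself relies on: the paper does not spell out a proof of Theorem \ref{theo:hadamard parametrix}, it performs the flat computation and asserts that it transfers to $\bar M$ in geodesic coordinates (following H\"ormander), and your proposal is precisely that transfer --- normal coordinates at $y$ with $c$ below the injectivity radius on the compact set $\bar V$ (you should also require $c<\mathrm{dist}(\bar V,\del M)$ so the construction never reaches the boundary), Leibniz expansion against the Riesz identities of Lemma \ref{lemma:properties of riesz distributions}, transport equations integrated as radial ODEs, and the $\Ccal^k$ count for the remainder from $R_N=C\,\chi_+^{N+\frac{1-n}{2}}(t^2-s^2)$, which is correct.

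The point you defer as ``bookkeeping'' is, however, the actual content of the curved statement, and as written your step 2 is not yet a proof of it. Properties 3--4 of Lemma \ref{lemma:properties of riesz distributions} concern the \emph{Euclidean} operator; when the Riemannian $\del_t^2+\Delta_D$ acts on $R_\nu(t,s(x,y))$, which is radial in normal coordinates at $y$, the Gauss lemma gives $\Delta_D R_\nu=\Delta_{\mathrm{eucl}}R_\nu-\braket{\del_x\log\sqrt{|g|},\nabla_x R_\nu}$, and via $-2\nabla_xR_\nu=xR_{\nu-1}$ this adds $\tfrac12\braket{x,\del_x\log\sqrt{|g|}}\,U_\nu$ to the coefficient of $R_{\nu-1}$. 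So the recursion on $M$ is \emph{not} literally \eqref{eq:transport equations}: it carries this extra zeroth-order term, or equivalently \eqref{eq:transport equations} is satisfied by $|g|^{1/4}U_\nu$ in the normal coordinates at $y$ (with $\Delta U_{\nu-1}$ interpreted accordingly). Writing this out is exactly what justifies your parenthetical ``absorbed in the normalisation of $U_\nu$'', and it also fixes the normalisation you left open: $U_0(x,y)=|g(x)|^{-1/4}$ in normal coordinates at $y$, hence $U_0(y,y)=1$ rather than $|g(y)|^{1/4}$, and the $\nu=0$ term then yields the delta with the weight $\sqrt{|g|}$ of \eqref{eq:hadamard parametrix} once the coordinate density is accounted for. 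Finally, watch your signs: with the paper's convention that $\Delta_D$ is positive, the Leibniz rule produces the cross term $-2\braket{\nabla U_\nu,\nabla R_\nu}$, which by property 4 equals $+\braket{x,\del_x U_\nu}R_{\nu-1}$, matching \eqref{eq:transport equations}; with the $+2\braket{\nabla U_\nu,\nabla R_\nu}$ you wrote, the recursion you display would come out with the wrong sign on the radial term.
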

	Remark that the condition that the geodesic distance be smaller than $c$ ensures that we do not reach the boundary of $M$ in time smaller than $c$. Thus, the above result does not provide a parametrix for the mixed Dirichlet-Cauchy problem. It is in fact possible to modify the construction of the Hadamard parametrix to include the Dirichlet boundary condition, see the discussion in \cite{hormander1994analysispseudodifferential} leading to Proposition 17.4.4.
	
	This concludes our overview of the construction of the Hadamard parametrix. We will now show, to conclude this Section, that it can be used to approximate the Fourier transform of the kernel of the cosine transform of $\Delta_D$. We begin by stating an analytical lemma that controls the error in the approximation, cf. \cite{hormander1994analysispseudodifferential}, Lemma 17.5.4 for a proof.
	\begin{lemma}
		\label{lemma:error estimate}
		Let $k\in\N$ and let $h\in\Cinf(\R\times\bar{M})$ satisfy $\del_t^l h=0$ at $t=0$ for all $l<k$. Assume furthermore that $v\in\Cinf([0,T]\times \bar{M})$ solves
		\begin{equation*}\label{eq:mixed problem with vanishing Cauchy data}
			\begin{aligned}
				\left(\frac{\del^2}{\del t^2}+\Delta_D\right)v&=h\quad\text{on }[0,T]\times \overline{M},\\
				v&=0\quad \text{on }[0,T]\times \del M,\\
				v=\dot v&=0\quad \text{at }t=0.
			\end{aligned}
		\end{equation*}
		Then
		\begin{equation}\label{eq:error estimte for the mixed problem}
			\sum_{l=0}^{k+1}\|D_t^{k+1-l}v(t,\cdot)\|_{\sob^l}\lesssim \int_0^t\|D^k_s h(s,\cdot)\|_{L^2}ds+\sum_{l=0}^{k-1}\|D^{k-1-l}_th(t,\cdot)\|_{\sob^l}.
		\end{equation}
	\end{lemma}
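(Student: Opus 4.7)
The plan is to prove the estimate by the standard higher-order energy method for the Dirichlet wave equation, combining the basic $L^2$-energy inequality with elliptic regularity to trade temporal for spatial derivatives.

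First I would observe that the vanishing Cauchy data propagate to all orders $\le k+1$: iterating $D_t^{j+2}v = D_t^j h - \Delta_D D_t^j v$ together with $v(0)=\dot v(0)=0$ and the vanishing of $D_t^l h$ at $t=0$ for $l<k$ gives $D_t^j v(0)=0$ for all $j\le k+1$, and the Dirichlet boundary condition extends to every $D_t^j v$ since $v$ itself vanishes on $\partial M$ for all times. Hence $v_k := D_t^k v$ solves the Dirichlet-Cauchy problem with source $D_t^k h$ and trivial initial data. The Dirichlet energy $E_k(t) = \|\dot v_k\|_{L^2}^2+\|d v_k\|_{L^2}^2$ satisfies $\tfrac{d}{dt}E_k = 2\,\mathrm{Re}\,(\dot v_k, D_t^k h)_{L^2}$ by Green's identity (the boundary integrand vanishes by Dirichlet), so $E_k(0)=0$ and the usual comparison lemma give $E_k(t)^{1/2}\lesssim \int_0^t\|D_s^k h\|_{L^2}ds$. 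Combined with the Poincaré inequality this yields
\[
\|D_t^{k+1}v(t)\|_{L^2}+\|D_t^k v(t)\|_{\sob^1}\lesssim \int_0^t \|D_s^k h\|_{L^2}\,ds,
\]
which accounts for the $l=0$ and $l=1$ summands on the left of \eqref{eq:error estimte for the mixed problem}.

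The remaining terms $l=2,\ldots,k+1$ I would handle by inducting on $l$ via elliptic regularity for $\Delta_D$. Setting $w := D_t^{k+1-l}v \in \sob^l(M)\cap\sob^1_0(M)$, the equation reads $\Delta_D w = D_t^{k+1-l}h - D_t^{k+3-l}v$, and the higher-regularity theorem of Section \ref{sect:laplace} (with the intermediate $\sob^{l-1}$-norm absorbed by standard interpolation between $L^2$ and $\sob^l$) gives
\[
\|D_t^{k+1-l}v\|_{\sob^l}\lesssim \|D_t^{k+1-l}h\|_{\sob^{l-2}}+\|D_t^{k+3-l}v\|_{\sob^{l-2}}+\|D_t^{k+1-l}v\|_{L^2}.
\]
The first term is exactly the $(l-2)$-th summand $\|D_t^{k-1-(l-2)}h\|_{\sob^{l-2}}$ on the right of \eqref{eq:error estimte for the mixed problem}; the second has been bounded at the previous stage of the induction, the base case $l=2$ matching the $\|D_t^{k+1}v\|_{L^2}$ already controlled above; the third is handled by integration from $t=0$ using $D_t^{k+1-l}v(0)=0$, $\|D_t^{k+1-l}v(t)\|_{L^2}\le \int_0^t \|D_s^{k+2-l}v\|_{L^2}ds$, iterated until the integrand becomes $\|D_s^{k+1}v\|_{L^2}$, and absorbed into the main term (with $T$-dependent constants, which are admissible since we work on the bounded interval $[0,T]$).

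The main nuisance I expect is the bookkeeping of indices through the induction on $l$, together with the verification that the auxiliary $\sob^{l-1}$-norm produced by the elliptic regularity step really can be absorbed into $\sob^l$ modulo $L^2$. No new analytic ingredient is required: the whole argument rests on the base Dirichlet energy identity and the elliptic regularity already established in Section \ref{sect:laplace}.
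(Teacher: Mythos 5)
Your argument is correct, and it is essentially the standard proof: the paper itself does not prove this lemma but defers to H\"ormander (Lemma 17.5.4/17.5.5 in \cite{hormander1994analysispseudodifferential}), whose argument is exactly your combination of the time-differentiated Dirichlet energy identity with elliptic regularity for $\Delta_D$ (plus the Ehrling-type absorption of the intermediate $\sob^{l-1}$-norm). The only caveat is that your handling of the $L^2$-terms by iterated integration from $t=0$ makes the implicit constant depend on $T$, which is harmless here since the lemma is stated on a fixed interval $[0,T]$ and is applied for $t$ in a bounded range.
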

	In the statement and proof below, we let $s(x)$ be the geodesic distance from $x$ to $\del M$ and denote by $M_\rho=\{x\in M\colon s(x)>\rho\}$ for some $\rho>0$.
	\begin{theorem}
		\label{theo:hadamard approximation of spectral measure}
		Let $\bar{M}$ be a compact Riemannian manifold with boundary and choose $d>0$ such that \eqref{eq:hadamard parametrix} holds true for all $(x,y)\in M\times M_\rho$ with $s(x,y)<d$ and $\rho<d$. Moreover let $\Omega=\{(t,x)\in\R\times M\colon |t|<\min(s(x),d)\}$. Then
		\begin{equation}\label{eq:approxmation of spectral measure via hadamard}
			\widehat{dm}(x,x,t)-\sum_{2\nu< n}\del_t\left(R_\nu(t,0)-\check{R}(t,0)\right)U_{\nu}(x,x)|g(x)|^{{\frac{1}{2}}}
		\end{equation}
		is in $|t|^{n\!\!\!\mod 2}\Cinf(\Omega)$, with all derivatives bounded in $\Omega$. Whatever the dimension, the Taylor expansion of \eqref{eq:approxmation of spectral measure via hadamard} with respect to $t \ge 0$ is 
		\begin{equation}\label{eq:taylor expansion of approximation}
			\sum_{2\nu\ge n} \del_t\left(R_{\nu}(t,0)-\check{R}_\nu(t,0)\right)U_\nu(x,x)|g(x)|^{\frac{1}{2}}.
		\end{equation}
	\end{theorem}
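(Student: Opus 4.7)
The plan is to compare $\widehat{dm}(x,y,t)$---which by the preceding lemma is (up to the factor $|g(y)|^{1/2}$) the kernel of $\cos(t\sqrt{\Delta_D})$---with a symmetrised Hadamard parametrix off the diagonal, and then restrict to $x=y$. For $(x,y)\in M\times V$ with $s(x,y)<d$ and $|t|<d$, introduce
\begin{equation*}
  F_N(t,x,y) = \sum_{\nu=0}^N U_\nu(x,y)\,\partial_t\bigl(R_\nu - \check R_\nu\bigr)\bigl(t, s(x,y)\bigr)\,|g(y)|^{1/2}.
\end{equation*}
Because the $\delta_0$-contributions of $R_0$ and $\check R_0$ cancel (property 3 of Lemma \ref{lemma:properties of riesz distributions}), $R_\nu-\check R_\nu$ is a homogeneous solution of the wave equation, and Theorem \ref{theo:hadamard parametrix} applied summand-by-summand yields $(\del_t^2+\Delta_D)F_N=\partial_t[(\Delta_D U_N)(R_N-\check R_N)(t,s(x,y))]|g(y)|^{1/2}$, which is $\Ccal^k$ once $N>k+(n-1)/2$. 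From property 5, the Cauchy data of $F_N$ at $t=0$ come only from the $\nu=0$ term, giving $F_N|_{t=0}=2U_0(x,y)\delta(x-y)|g(y)|^{1/2}$ and $\del_tF_N|_{t=0}=0$, i.e.\ the initial data of the cosine kernel once $U_0$ is chosen with the standard Hadamard normalization.

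The off-diagonal remainder $w_N=\widehat{dm}-F_N$ then satisfies, on $\{(t,x,y):|t|<\min(s(x),d),\,s(x,y)<d\}$, a mixed wave problem with $\Ccal^k$ right-hand side vanishing to high order at $t=0$ and vanishing Cauchy data. The key point is that the restriction $|t|<s(x)$ built into $\Omega$ implements finite propagation speed: the support of $R_\nu-\check R_\nu$ sits in the light cone of the origin (property 1), so $F_N$ does not see $\del M$, and the cosine kernel solves the Dirichlet mixed problem by construction. After multiplying by a cut-off in $x$ supported in $\{s(x)>|t|\}$, which is invisible to both quantities, Lemma \ref{lemma:error estimate} applies; iterating it in $k$ and appealing to the parametric Sobolev embedding of Lemma \ref{lemma:parametric sobolev embedding} delivers $w_N\in\Ccal^\infty$ on this set with uniform control on all derivatives, provided $N$ is taken large enough.

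Now restrict to the diagonal $x=y$, so $s(x,y)=0$. By part 7 of Lemma \ref{lemma:properties of riesz distributions}, $\partial_t(R_\nu-\check R_\nu)(t,0)$ is a genuine distributional singularity at $t=0$ (a constant multiple of $\delta^{(n-2\nu-1)}(t)$) precisely when $2\nu<n$, while for $2\nu\ge n$ it is a continuous function of $t$ coming from the regular regime of $\chi_+^{\nu+(1-n)/2}$ in part 2. Subtracting from $\widehat{dm}(x,x,t)$ only the singular block $2\nu<n$ therefore leaves a $\Ccal^\infty$ remainder on $\Omega$---the $2\nu\ge n$ terms being smooth are absorbed into $w_N|_{x=y}$. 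To identify the Taylor expansion claimed in \eqref{eq:taylor expansion of approximation}, observe that each summand $\del_t(R_\nu-\check R_\nu)(t,0)\propto\del_t[\sgn(t)|t|^{2\nu+1-n}]$ vanishes to order $2\nu-n$ at $t=0$, so truncating the tail at successive $N$ produces successive Taylor polynomials of the smooth remainder.

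The parity factor $|t|^{n\bmod 2}$ comes directly from this monomial form: for $n$ even, $2\nu+1-n$ is odd and $\sgn(t)|t|^{2\nu+1-n}=t^{2\nu+1-n}$ is smooth, while for $n$ odd the exponent is even and a factor of $|t|$ persists in every summand, hence in the smooth remainder. The main obstacle is the step where Lemma \ref{lemma:error estimate} is invoked: the Hadamard parametrix $F_N$ does \emph{not} satisfy the Dirichlet condition globally, so one must carefully truncate $F_N$ in space and pass from an estimate on a fixed time slab to the cone $\Omega$, relying on finite propagation speed of the true cosine kernel to show that the truncation alters neither $\widehat{dm}$ nor $F_N$ on $\Omega$. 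Once this localisation is arranged, the remaining steps are algebraic manipulations based on properties 2, 4, 5, and 7 of Lemma \ref{lemma:properties of riesz distributions}.
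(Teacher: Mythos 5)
Your overall strategy is the same as the paper's: compare $\widehat{dm}$ with the ($t$-derivative of the) Hadamard parametrix, control the difference via the energy estimate of Lemma \ref{lemma:error estimate}, restrict to the diagonal, and read off the singular block $2\nu<n$, the parity factor $|t|^{n\bmod 2}$ and the Taylor expansion \eqref{eq:taylor expansion of approximation} from the explicit form of $\del_t(R_\nu-\check R_\nu)(t,0)$. Symmetrising the parametrix from the outset, instead of working with $t>0$ and then using the evenness of $\widehat{dm}$ as the paper does, is an inessential variation.

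However, the central analytic step as you state it does not go through. Lemma \ref{lemma:error estimate} applies to a \emph{smooth} solution of the mixed problem, and you invoke it directly for the kernel difference $w_N=\widehat{dm}-F_N$, whose regularity is precisely what is to be proven; its ``Cauchy data'' are moreover differences of delta distributions, not functions, so the hypotheses cannot even be formulated for it. The paper removes this circularity by testing in the $y$-variable: for $f\in\Cinf_c(M_\rho)$ and $t<\rho$ it forms $u(t,x)=\int\Ecal(t,x,y)|g(y)|^{1/2}f(y)\,dy$ and $v=\cos\bigl(t\sqrt{\Delta_D}\bigr)f-\del_t u$, which is genuinely smooth, has vanishing Cauchy and Dirichlet data and solves the wave equation up to the $\Ccal^{k}$ error coming from $r_N$; Lemma \ref{lemma:error estimate} then bounds all derivatives of $v$ by a power of $t$ times $\|f\|_{L^1}$, and duality in $f$ converts this into the kernel bounds $|D^\alpha_{x,y,t}K_N|\le Ct^{2N-|\alpha|-n}$ for $|\alpha|\le N-n-3$, after which one varies $N$ (a fixed $N$ gives only finite regularity; the terms with $2\nu\ge n$ that are added or removed are increasingly flat, which is also what identifies the Taylor expansion). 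Relatedly, your spatial cut-off supported in $\{s(x)>|t|\}$ is not ``invisible to both quantities'': $\widehat{dm}(x,y,t)$ does not vanish when $s(x)\le|t|$. It is also unnecessary: once $y\in M_\rho$ and $|t|<\rho$ are fixed, the parametrix already vanishes near $\del M$ by the support property of $R_\nu$, so $v$ satisfies the exact Dirichlet condition and the energy estimate applies on all of $\bar M$; the cone condition defining $\Omega$ enters only at the very end, when setting $x=y$. (A minor point: the two-sided limit of $\del_t(R_0-\check R_0)$ at $t=0$ is $\delta$, not $2\delta$, so no factor $2$ should appear in the matching of the Cauchy data.)
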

	\begin{proof}
		Let $y\in M_\rho$ and $t<\rho$. Then the Hadamard parametrix 
		\begin{equation*}
			\label{eq:hadamard parametrix 2}
			\Ecal(t,x,y)\equiv\sum_{\nu=0}^NU_\nu(x,y)R_{\nu}(t,s(x,y))
		\end{equation*}
		is defined for $x\in M$, with $\Ecal=0$ near $\del M$. If we take $f\in\Cinf_c(M_\rho)$, the function $u(t,x)$ defined by
		\begin{equation*}
			\label{eq:solution to wave equation with compact data}
			u(t,x)\equiv\int \Ecal(t,x,y)|g(y)|^{1/2}f(y)dy
		\end{equation*}
		is in ${\Cinf}([0,\rho]\times M)$ and satisfies $u=0$ on $[0,\rho]\times M$ and $u(0,x)=0$, $\dot u(0,x)=f(x)$.
		
		The assignment $f\mapsto u$ is continuous in the $\Cinf$-topology. Indeed, changing variable in the integral to $z$ such that $\exp_y z=x$, where $\exp$ is the exponential map of $M$, we have $z=s(x,y)$. Thus, using \textasciitilde\; to signify that the function has been rewritten using the new coordinate $z$, we are led to 
		\begin{equation*}
			u(t,x)=\sum_{\nu=0}^N\int \tilde U_\nu(x,z)R_\nu(t,z)|\tilde g(z)|^{1/2}\tilde f(z)dz.
		\end{equation*}
		In this expression, the distributions $R_\nu$ are acting on smooth functions of $z$, depending smoothly on the parameter $x$. In particular, the dependence is continuous as claimed.
		
		It follows that
		\begin{equation*}
			v(t,x)=\cos\left(t\sqrt{\Delta_D}\right)f(x)-\frac{\partial u}{\partial t}(t,x)
		\end{equation*}
		has vanishing Cauchy and Dirichlet data. Furthermore, by \eqref{eq:hadamard parametrix}, $v$ solves the wave equation approximately. Letting $r_N(t,x,y)$ denote
		\begin{equation*}
			r_N(t,x,y)=(\Delta_D U_N(x,y))R_{N}(t,s(x,y)),
		\end{equation*}
		we have indeed
		\begin{equation*}
			\left(\frac{\del ^2}{\del t^2}+\Delta_D\right)v(t,x)=-\int\frac{\del r_N}{\del t}|g(y)|^{1/2}f(y)dy
		\end{equation*}
		where $r_N\in\Ccal^{k+1}$ for $N>k+(n+1)/2$ in view of Lemma \ref{lemma:properties of riesz distributions}. Using Lemma \ref{lemma:error estimate}, one obtains that all derivatives of $v$ are bounded by a power of $t$ times the $L^1$-norm of $f$.
		
		Now, for $t>0$ set
		\begin{equation*}
			K_N(t,x,y)=\widehat{dm}(x,y,t)-\partial_t \Ecal(t,x,y)|g(y)|^{1/2}.
		\end{equation*}
		This is continuous in $t$ with values in $\Dcal'(M\times M_0)$. Then, the same argument as above shows that $K_N\in\Ccal^{N-n-3}$ and all its derivatives are bounded by a power of $t$, 
		\begin{equation*}
			\left|D^\alpha_{x,y,t}K_N(t,x,y)\right|\le Ct^{2N-|\alpha|-n}, \quad |\alpha|\le N-n-3.
		\end{equation*}
		Since $\widehat{dm}$ is even in $t$, the same bounds hold true for $t\in\R$, provided we replace $K_N$ with
		\begin{equation*}
			\widehat{dm}(x,y,t)-\del_t\left(\Ecal(t,x,y)-\Ecal(-t,x,y)\right)|g|^{1/2}.
		\end{equation*}
		In view of Lemma \ref{lemma:properties of riesz distributions} we have that the above is a continuous function of $(x,y)$ with values in $\Dcal'(\R)$. Restriction to the diagonal gives the claim and finishes the proof.
	\end{proof}
	
	\section{The Tauberian theorem and the Weyl law}
	
	We are finally ready to discuss the proof of the Weyl law. It is based on a result of \textit{Tauberian} type, Theorem \ref{theo:tauberian theorem} below. Let us remark, for starters, that we have amassed by now a substantial amount of information on the cosine transform of the spectral measure. In particular, we have seen that it leads to an asymptotic solution to the wave equation. Moreover, 8. in Lemma \ref{lemma:properties of riesz distributions} shows that the first Hadamard coefficient, computed at the origin, has a geometric meaning, being related to the volume of the Euclidian ball. The Fourier Tauberian theorem below (cf. \cite{hormander1994analysispseudodifferential}, Lemma 17.5.4) allows us to translate the control we have on the cosine transform into explicit bounds for the spectral function.
	
	\begin{theorem}[Fourier Tauberian Theorem]
		\label{theo:tauberian theorem}
		Let $f$ be an increasing temperate function and $g$ a function of locally bounded variation with $f(0)=g(0)=0$. Assume further that there exist $p\in[0,n-1]$ and positive constants $a, c_1,c_2, M_1,M_2$ with $c_j\ge a$ such that
		\begin{equation}
			\label{eq:tauberian assumptions}
			\begin{aligned}
			|dg(\tau)|&\le M_1(|\tau|+c_1)^{n-1}d\tau\\
			|(df-dg)\ast\phi_a(\tau)|&\le M_2(|\tau|+c_2)^p,\quad \forall \tau\in\R.
		\end{aligned}
		\end{equation}
		Then, there exists a constant $C>0$ depending on $p$ and $n$ only such that
		\begin{equation}
			\label{eq:tauberian estimate}
			|f(\tau)-g(\tau)|\le C\left(M_1a\left(|\tau|+c_1\right)^{n-1} +M_2\left(|\tau|+a\right)\left(|\tau|+c_2\right)^p\right).
		\end{equation}
	\end{theorem}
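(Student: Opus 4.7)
The plan is to decompose
\begin{equation*}
f(\tau)-g(\tau) = \bigl[f(\tau) - (f*\phi_a)(\tau)\bigr] + \bigl[((f-g)*\phi_a)(\tau)\bigr] + \bigl[(g*\phi_a)(\tau) - g(\tau)\bigr]
\end{equation*}
and to estimate each bracket separately, using the $dg$-bound for the third, the $(df-dg)*\phi_a$-bound for the second, and both combined with the monotonicity of $f$ for the first. By symmetry it suffices to treat $\tau\ge 0$, and one may assume $\phi$ is an even, non-negative Schwartz function with $\int\phi=1$ and $\phi_a(s)=a^{-1}\phi(s/a)$; the normalisations $f(0)=g(0)=0$ will fix constants of integration.

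The third bracket is the classical mollification error: the hypothesis $|dg(\sigma)|\le M_1(|\sigma|+c_1)^{n-1}$ gives $|g(\tau)-g(\tau-s)|\le M_1|s|(|\tau|+|s|+c_1)^{n-1}$, and convolution against $\phi_a$ (which concentrates mass at scale $a\le c_1$) produces the first term $CM_1 a(|\tau|+c_1)^{n-1}$ of \eqref{eq:tauberian estimate}. The middle bracket is handled by integration: writing
\begin{equation*}
((f-g)*\phi_a)(\tau) = ((f-g)*\phi_a)(0) + \int_0^\tau \bigl[(df-dg)*\phi_a\bigr](\sigma)\,d\sigma,
\end{equation*}
the second assumption in \eqref{eq:tauberian assumptions} bounds the integral by $\lesssim M_2\tau(\tau+c_2)^p$, while the boundary value $((f-g)*\phi_a)(0)$ is controlled via the oscillation estimate derived below, producing the second term of \eqref{eq:tauberian estimate}.

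The first bracket is the crux of the argument and is the main obstacle, since we have no direct pointwise control on $df$; only $f$'s monotonicity is available. The workaround is that convolving the decomposition $df=dg+(df-dg)$ with $\phi_a$ does give such control,
\begin{equation*}
(df*\phi_a)(\tau)\le C\bigl[M_1(|\tau|+c_1)^{n-1}+M_2(|\tau|+c_2)^p\bigr],
\end{equation*}
obtained by the same direct computation as for $g$ and straight from the hypothesis. Choosing $\phi$ bounded below by a positive constant on $[-1/2,1/2]$, so that $\phi_a\ge c_0/a$ on $[-a/2,a/2]$, and using $df\ge 0$ yields the oscillation bound $f(\tau+a/2)-f(\tau-a/2)\le Ca[M_1(|\tau|+c_1)^{n-1}+M_2(|\tau|+c_2)^p]$. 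Inserting this into $|f(\tau)-(f*\phi_a)(\tau)|\le\int|f(\tau)-f(\tau-s)|\phi_a(s)\,ds$ controls the first bracket by a quantity of the same shape as the claimed bound. The remaining technical point is the polynomial tail of the Schwartz function $\phi_a$, handled by a dyadic decomposition in $|s|$: each shell $|s|\sim 2^k a$ contributes a factor $\lesssim 2^{k(1-N)}$ which telescopes once $N$ is large enough. The prefactor $(|\tau|+a)$ in \eqref{eq:tauberian estimate} encodes the transition between the regime $|\tau|\ll a$, where the initial term $((f-g)*\phi_a)(0)$ of size $\sim a$ dominates, and $|\tau|\gtrsim a$, where the integral is the main contribution.
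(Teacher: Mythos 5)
The paper itself gives no proof of Theorem \ref{theo:tauberian theorem}: it is imported from \cite{hormander1994analysispseudodifferential} (see also \cite{safarov_fourier_tauberian_2001}), so your argument can only be measured against the standard proof in those references --- and it reconstructs that proof correctly. All the essential ingredients are present: the splitting $f-g=(f-f\ast\phi_a)+((f-g)\ast\phi_a)+(g\ast\phi_a-g)$; the mollification estimate for $g$, which is where $a\le c_1$ enters; the fundamental-theorem step for the middle term, anchored at the origin by $f(0)=g(0)=0$; and, crucially, the Tauberian step, i.e.\ the oscillation bound $f(\tau+a/2)-f(\tau-a/2)\lesssim a\left(M_1(|\tau|+c_1)^{n-1}+M_2(|\tau|+c_2)^p\right)$ obtained from $df\ge 0$, the lower bound $\phi_a\ge c_0/a$ on $[-a/2,a/2]$, and the decomposition $df\ast\phi_a=dg\ast\phi_a+(df-dg)\ast\phi_a$, which substitutes for the missing pointwise control on $df$. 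The dyadic summation over the Schwartz tails is the standard device to close these estimates, and the temperateness/polynomial-growth hypotheses make all convolutions and the differentiation under the integral legitimate. Two remarks. First, the statement leaves $\phi_a$ unspecified, so fixing a convention as you do is legitimate; but for the theorem to be usable in the proof of Theorem \ref{theo:local weyl law} one must additionally require $\hat\phi\in\Cinf_c(\R)$ (only $|t|<\min(s(x),d)$ is controlled by Theorem \ref{theo:hadamard approximation of spectral measure}), which is compatible with your requirements (e.g.\ $\phi=\psi^2$ with $\hat\psi\in\Cinf_c(\R)$, suitably normalised) and does not affect your argument. Second, the boundary value $((f-g)\ast\phi_a)(0)$ is of size $O(M_1ac_1^{n-1}+M_2ac_2^{p})$ and is therefore absorbed into both summands of \eqref{eq:tauberian estimate}, not only the second; this is cosmetic and harmless.
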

		
	\begin{theorem}[Local Weyl law]
		\label{theo:local weyl law}
		There exist a positive constant $C$ such that the spectral function satisfies 
		\begin{equation}\label{eq:local weyl law}
			\left|e(x,x;\lambda)|g(x)|-e_0(0,\lambda)\sqrt{|g(x)|}\right|\le C\frac{\lambda^{n/2}}{1+s(x)\sqrt{\lambda}}.
		\end{equation}
	\end{theorem}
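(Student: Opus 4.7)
My plan is to apply the Fourier Tauberian Theorem (Theorem~\ref{theo:tauberian theorem}) with $f$ and $g$ chosen so that $df$ is the spectral measure of $\Delta_{D}$ on the diagonal and $dg$ is its leading Euclidean Hadamard model, and then to use Theorem~\ref{theo:hadamard approximation of spectral measure} to control the convolution $(df-dg)\ast\phi_{a}$ while the explicit form of $e_{0}$ controls $dg$. Fix $x\in\bar M$, set $\tau=\sqrt\lambda$, and take
\[
f(\tau)=\tfrac{1}{2}\sqrt{|g(x)|}\,\sgn(\tau)\,e(x,x,\tau^{2}),\qquad g(\tau)=\tfrac{1}{2}\sgn(\tau)\,e_{0}(0,\tau^{2}),
\]
so that $f(0)=g(0)=0$, both are increasing in $\tau$, and $df=dm(x,x,\cdot)$ in the notation of Section~\ref{sect:spectral function}; multiplying the eventual bound on $|f-g|$ by $2\sqrt{|g(x)|}$ (bounded above and below on $\bar M$) recovers exactly the quantity $|e(x,x,\lambda)|g(x)|-e_{0}(0,\lambda)\sqrt{|g(x)|}|$ in~\eqref{eq:local weyl law}, up to harmless constants.

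The first Tauberian hypothesis $|dg(\tau)|\le M_{1}(|\tau|+c_{1})^{n-1}d\tau$ is immediate from $e_{0}(0,\tau^{2})=C_{n}|\tau|^{n}$, with $M_{1},c_{1}$ uniform in $x$. For the second, Theorem~\ref{theo:hadamard approximation of spectral measure} together with item~8 of Lemma~\ref{lemma:properties of riesz distributions} (which identifies the $\nu=0$ leading Hadamard term with $\widehat{dg}$ up to the standard normalisation of $U_{0}(x,x)$) says that $\widehat{df}-\widehat{dg}$ on $|t|<\min(s(x),d)$ equals a smooth function with all derivatives bounded uniformly in $x$, plus a finite sum of sub-leading Hadamard contributions with $1\le 2\nu<n$; by homogeneity these sub-leading terms correspond to polynomials in $\tau$ of degree strictly less than $n-1$ and may be absorbed into $g$ without spoiling the first hypothesis. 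I choose a mollifier $\phi_{a}$ whose Fourier transform $\hat\phi_{a}$ is smooth, bounded by $1$, and supported in $|t|\le a$, with $a=c\min(s(x),d)$ for a small absolute constant $c>0$; writing
\[
(df-dg)\ast\phi_{a}(\tau)=\tfrac{1}{2\pi}\int e^{\im\tau t}(\widehat{df}-\widehat{dg})(t)\,\hat\phi_{a}(t)\,dt
\]
and using the uniform boundedness of $\widehat{df}-\widehat{dg}$ on $|t|\le a$ yields the $\tau$-independent estimate $|(df-dg)\ast\phi_{a}(\tau)|\lesssim a$, i.e.\ the second hypothesis with $p=0$, $c_{2}=a$, and $M_{2}\lesssim a$.

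The Tauberian conclusion~\eqref{eq:tauberian estimate} then gives, for $|\tau|\ge 1$,
\[
|f(\tau)-g(\tau)|\lesssim M_{1}a|\tau|^{n-1}+M_{2}(|\tau|+a)\lesssim s(x)\,|\tau|^{n-1},
\]
which translates into a bound by $s(x)\lambda^{(n-1)/2}$ for the left-hand side of~\eqref{eq:local weyl law}. In the regime $s(x)\sqrt\lambda\ge 1$ this is absorbed by $\lambda^{n/2}/(1+s(x)\sqrt\lambda)$ because the inequality $s(x)\lambda^{(n-1)/2}(1+s(x)\sqrt\lambda)\le C\lambda^{n/2}$ reduces to $s(x)/\sqrt\lambda+s(x)^{2}\le C$, valid since $s(x)\le\mathrm{diam}(\bar M)$ and $s(x)/\sqrt\lambda\le s(x)^{2}$ in this regime. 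In the complementary near-boundary regime $s(x)\sqrt\lambda<1$, the Hadamard parametrix is unavailable at the time scale $|t|\sim 1/\sqrt\lambda$ demanded by the Tauberian argument, and one falls back on the direct estimate of Theorem~\ref{theo:direct estimate of spectral function} together with the explicit $|e_{0}(0,\lambda)|\lesssim\lambda^{n/2}$: both terms are $\lesssim\lambda^{n/2}$, matching the right-hand side of~\eqref{eq:local weyl law} since $1+s(x)\sqrt\lambda<2$.

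The main obstacle is verifying the second Tauberian hypothesis with constants \emph{uniform in} $x$: one must check that the derivative bounds of the remainder in Theorem~\ref{theo:hadamard approximation of spectral measure} depend on $x$ only through the base point of the parametrix, so that $M_{2}$ genuinely scales like $a\sim s(x)$ rather than blowing up as $x\to\partial M$. This scaling is precisely what produces the denominator $1+s(x)\sqrt\lambda$ in the estimate, and once it is in place the matching between the Tauberian estimate and the direct estimate along the critical scale $s(x)\sqrt\lambda\sim 1$ is routine.
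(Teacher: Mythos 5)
Your overall architecture is the paper's: direct estimate of Theorem \ref{theo:direct estimate of spectral function} in the region $s(x)\sqrt\lambda\le 1$, and Theorem \ref{theo:tauberian theorem} applied to $f(\tau)\sim\sgn(\tau)e(x,x,\tau^2)$ and $g(\tau)\sim\sgn(\tau)e_0(0,\tau^2)$ in the region $s(x)\sqrt\lambda>1$, with Theorem \ref{theo:hadamard approximation of spectral measure} feeding the second hypothesis. However, the quantitative core is wrong: you have inverted the scaling of the parameter $a$. In the Tauberian theorem the term $M_1 a(|\tau|+c_1)^{n-1}$ is the smoothing error, proportional to the width of $\phi_a$ in the $\tau$-variable, and $\hat\phi_a$ is then supported in a time window of length $\sim 1/a$. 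Since the Hadamard approximation of $\widehat{dm}$ is only valid for $|t|<\min(s(x),d)$, you are \emph{forced} to take $a\gtrsim 1/\min(s(x),d)$ (large near the boundary), exactly as the paper does; the smoothing error then reads $\tau^{n-1}/\min(s(x),d)\sim\lambda^{n/2}/(s(x)\sqrt\lambda)$, which is what produces the denominator $1+s(x)\sqrt\lambda$. By instead setting $a=c\min(s(x),d)$ with $\hat\phi_a$ supported in $|t|\le a$ and then substituting this small $a$ into the conclusion, you make the error \emph{shrink} as $x\to\partial M$, i.e.\ you claim $|f-g|\lesssim s(x)\lambda^{(n-1)/2}$. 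That bound is false near the critical scale: in the half-space model $e(x,x;\lambda)-e_0(0,\lambda)=-(2\pi)^{-n}\int_{|\xi|<\sqrt\lambda}\cos(2s(x)\xi_1)\,d\xi$, which at $s(x)\sqrt\lambda\sim 1$ is of genuine order $\lambda^{n/2}$, whereas your bound there would be $\lambda^{(n-2)/2}$. Shrinking the time interval on which you control the Fourier transform can only weaken, never improve, the pointwise conclusion.

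There is a secondary gap in your verification of the second hypothesis with $p=0$ and $M_2\lesssim a$. On $|t|<\min(s(x),d)$ the difference $\widehat{df}-\widehat{dg}$ is \emph{not} a bounded function: besides the uniformly bounded smooth remainder of Theorem \ref{theo:hadamard approximation of spectral measure}, it contains the sub-leading terms $\partial_t(R_\nu-\check R_\nu)(t,0)$ with $1\le 2\nu<n$, which are homogeneous distributions singular at $t=0$ (Fourier transforms of $|\tau|^{n-1-2\nu}$). The paper keeps $g$ as in the statement and estimates their convolutions with $\phi_a$ as regularisations of $|\tau|^{k}$, $k\le n-3$, obtaining the second hypothesis with $p=\max(n-3,0)$ and constant $M_2$; your proposal either ignores these terms or absorbs them into $g$, at which point $g$ is no longer the function appearing in \eqref{eq:local weyl law} and the discrepancy must be re-estimated separately. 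Fixing your argument amounts to reverting to the paper's choices $a=1/\min(s(x),d)$ and $p=\max(n-3,0)$, after which the rearrangement at $\tau=\sqrt\lambda$ gives \eqref{eq:local weyl law}.
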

	\begin{proof}
	In the region $s(x)\sqrt{\lambda}\le 1$, the claim is just a consequence of Theorem \ref{theo:direct estimate of spectral function} for $\alpha=0$. We thus focus on the region $s(x)\sqrt{\lambda}>1$, where we shall apply Theorem \ref{theo:tauberian theorem} to the functions
	\begin{align*}
		f(\tau)&=m(x,x;\tau)=|g(x)|\sgn(\tau) e(x,x;\tau^2)/2\\
		g(\tau)&=\sgn(\tau) e_0(0,\tau^2)\sqrt{|g(x)|}/2,		
	\end{align*}
	with $a=\frac{1}{\min(s(x),d)}$. Here, $d$ is the same number that one chooses to construct the Hadamard parametrix in Theorem \ref{theo:hadamard approximation of spectral measure}.
	
	With the exception of the second estimate \eqref{eq:tauberian assumptions}, the assumptions of $f$ and $g$ are clearly satisfied. On the other hand, Theorem \ref{theo:hadamard approximation of spectral measure}, together with Lemma \ref{lemma:properties of riesz distributions}, gives that the leading term in the asymptotic expansion of $\widehat{df}$ in terms of homogeneous distributions is exactly $\widehat{db}$. More precisely, we have smooth functions $\nu_l(x)$ such that, in the sense of distributions,
	\begin{equation*}\label{eq:approximation up to homogeneous distributions}
		(\widehat{df}-\widehat{dg})(t)=\sum_{l=1}^{\frac{n-1}{2}}\Fcal_{\tau\to t}(|\tau|^{n-1-2l})(t)\nu_l(x).
	\end{equation*}
	Recall now that the Fourier transform of a homogeneous distribution on $\R^n$ of degree $\alpha$ is again homogeneous, of degree $-\alpha-n$. It follows
	\begin{equation*}
		\label{eq:approximation up to homogeneous distributions 2}
		\begin{aligned}
			\left(df-dg\right)\ast\phi_a(\tau)&=(\Fcal_{\tau\to t})^{-1}\left(\sum_{l=1}^{\frac{n-1}{2}}\Fcal_{\tau\to t}(|\tau|^{n-1-2l})\hat{\phi}_a(t)\nu_l(x)\right)\\
			&=\sum_{l=1}^{\frac{n-1}{2}}(|\cdot|^{n-1-2l}\ast\phi_a)(\tau)\nu_l(x).
		\end{aligned}
	\end{equation*}
	For $l=(n-1)/2$, the convolution is just the integral of $\phi_a$, that is, 1. The other terms are just regularisations of $|\cdot|^{k}$ near zero, for $k$ a positive integer smaller than $n$. Thus, $(df-dg)\ast\phi_a$ is the sum of a bounded function, $\nu_{(n-1)/2}$, and regularisations of $|\cdot|^k$ multiplied by $\nu_l$. The highest order power that can appear is either $0$ for $n\le 3$ or $n-3$. It follows that the estimates \eqref{eq:tauberian assumptions} hold true with $p=\max(n-3,0)$. Consequently, the estimate \eqref{eq:tauberian estimate} holds true with the same $p$, so that the second summand is in fact of lower degree compared to $(|\tau|+c_1)^{n-1}$, as $\tau\to \infty$.
	
	Therefore, in the region $s(x)\sqrt{\lambda}>1$, we obtain the claim after taking $\tau=\sqrt{\lambda}$ and rearranging the estimate. The proof is complete.
	
	\end{proof}
	
	\begin{corollary}[Weyl law]\label{cor:weyl asymptotics}
		The function $N(\lambda)$ has the asymptotic behaviour
		\begin{equation}
			N(\lambda)\sim C_n\mathrm{vol}(M)\lambda^{n/2}+O(\lambda^{(n-1)/2}\log\lambda),
		\end{equation}
		where $C_n$ is $(2\pi)^{-n}$ times the volume of an Euclidian unit ball.
	\end{corollary}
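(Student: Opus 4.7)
The plan is to integrate the pointwise local Weyl law of Theorem \ref{theo:local weyl law} over $M$, exploiting that $N(\lambda)=\Tr E_\lambda=\int_M e(x,x;\lambda)\sqrt{|g(x)|}\,dx$ by \eqref{eq:kernel of spectral family} and \eqref{eq:coarse egv asymptotics}. First I would identify the Weyl constant explicitly: evaluating the formula in 8.\ of Lemma \ref{lemma:properties of riesz distributions} at the origin gives $e_0(0,\lambda)=(2\pi)^{-n}\omega_n\lambda^{n/2}$ with $\omega_n$ the Euclidean unit ball volume, which matches the stated $C_n$. Integrating \eqref{eq:local weyl law} over $M$ (absorbing the bounded density factors into the constant, since $\sqrt{|g|}$ is bounded above and below by compactness) then yields
\[
\left|N(\lambda)-C_n\mathrm{vol}(M)\lambda^{n/2}\right|\lesssim \lambda^{n/2}\int_M\frac{dx}{1+s(x)\sqrt{\lambda}}.
\]

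The remaining task is to show the integral on the right is $O(\lambda^{-1/2}\log\lambda)$. For this, I would use a layer-cake argument with respect to the boundary-distance function $s(x)$: letting $V(r)=\mathrm{vol}(\{s\le r\})$, the integral equals
\[
\int_0^{\mathrm{diam}(M)}\frac{dV(r)}{1+r\sqrt{\lambda}}.
\]
The key geometric input, and really the only non-trivial step, is that $V(r)\lesssim r$ for $r$ less than the injectivity radius of the normal exponential map of $\del M$, with $V(r)\le \mathrm{vol}(M)$ afterwards. This follows from the fact that, on a compact Riemannian manifold with smooth boundary, the inward normal exponential map of $\del M$ is a diffeomorphism onto a tubular neighbourhood with uniformly bounded Jacobian determinant. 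Granting this, $dV(r)\lesssim dr$ on $[0,r_0]$ and a direct computation gives
\[
\int_0^{\mathrm{diam}(M)}\frac{dr}{1+r\sqrt{\lambda}}=\frac{\log(1+\mathrm{diam}(M)\sqrt{\lambda})}{\sqrt{\lambda}}=O(\lambda^{-1/2}\log\lambda),
\]
and multiplying back by $\lambda^{n/2}$ returns the claimed error term $O(\lambda^{(n-1)/2}\log\lambda)$. The main obstacle is precisely this tubular-neighbourhood estimate controlling how much of the mass of $M$ lives within distance $1/\sqrt{\lambda}$ of $\del M$; once one has it, the rest of the argument is bookkeeping.
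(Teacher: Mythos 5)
Your proposal is correct and follows essentially the same route as the paper's proof: integrate the local Weyl law \eqref{eq:local weyl law} over $M$ and read off the constant from point 8.\ of Lemma \ref{lemma:properties of riesz distributions}. The only difference is that you make explicit the step the paper leaves implicit, namely the tubular-neighbourhood/layer-cake bound $\int_M\bigl(1+s(x)\sqrt{\lambda}\bigr)^{-1}dx=O(\lambda^{-1/2}\log\lambda)$, which is exactly where the $\log\lambda$ in the remainder comes from.
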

	\begin{proof}
		The estimate is obtained by integrating \eqref{eq:local weyl law} over $M$ with respect to the Riemannian volume form. The precise value of the constant follows from Lemma \ref{lemma:properties of riesz distributions}.
	\end{proof}
	
	Remark that this error bound is not the optimal one: it is known that one can always achieve $\lambda^{(n-1)/2}$, and in general (that is, without more restrictive geometric assumptions) this cannot be improved.
	
	\section{Historical notes and outlook}
	The asymptotic formula for the eigenvalue distribution of the Laplacian was first proven by \textcite{weyl_asymptotische_1912} in 1912 for a bounded planar domain with smooth boundary. It used the technique of \textit{Dirichlet-Neumann bracketing}: the interior of the domain is decomposed in squares of a given side and at each interface one solves a Dirichlet problem on one side and a Neumann problem on the other, and estimates the contribution of each of these to the counting function. Generalisations of this approach to higher dimensions appeared in subsequent years. 
	
	According to multiple sources\footnote{We were not able to verify this.}, Carleman devised a different method, closer to ours above: one can obtain information on the spectral function by studying the \textit{resolvent kernel} and then bound the counting functions by applying a Tauberian theorem. Inspired by this approach, in 1949 \textcite{minakshisundaram_properties_1949} used the Laplace transform and knowledge of the heat kernel to extend Carleman's idea. However, these methods did not provide precise error bounds.
	
	The optimal error was found by Levitan \cite{levitan_1952_asymptotic,levitan_1955_asymptotic_II} and Avakumovi$\check{\textrm{c}}$ \cite{avakumovic_uber_1956} by using the method of the cosine transform we adopted above. \textcite{hormander_spectral_1968} then introduced \textit{Fourier Integral Operators} as a tool to improve their approach and prove estimates for the spectral function of any positive elliptic pseudo-differential operator. In the case of closed manifolds, this method provided an optimal remainder estimate. The case of manifolds with boundary can also be treated with these tools, however obtaining the optimal bound of $\lambda^{(n-1)/2}$ is cumbersome to say the least, cf. the discussion preceding and following Theorem 17.5.9 in \cite{hormander1994analysispseudodifferential}. On the other hand, explicit bounds for the spectral function were obtained by \textcite{safarov_fourier_tauberian_2001} via a detailed analysis of the contributions of the interior and of the boundary. 
	
	The rôle of the geometry appeared more clearly in the work of Chazarain \cite{chazarain_formule_1974} and of Duistermaat and Guillemin \cite{duistermaat_1975_spectrum}. They proved a rigorous version of the Gutzwiller \textit{trace formula}, namely that the distributional kernel of the wave group, restricted to the diagonal, has an asymptotic expansion in powers of $\lambda$. This formula characterises the singularities of the spectral counting function, which appear at the times $T$ for which the manifold admits a closed geodesic of period $T$. By smearing with a test function localised near 0 and shrinking its support, one obtains the eigenvalue asymptotics. This question has also received more attention recently with the improvements and geometric techniques in \textcite{canzani_weyl_2023}.
	
	Remarkably, the above results can be generalised to wave-type operators on globally hyperbolic stationary spacetimes. In particular, Strohmaier and Zelditch \cite{strohmaier_2021_gutzwiller,strohmaier_2021_spectral} proved a relativistic version of the trace formula for the Klein-Gordon operator. It is interesting here to observe that the heat and wave group expansions have the same coefficients, although the first is not relativistically invariant and does not make sense in this more general setting.
	
	Finally, the question of eigenvalue asymptotics has been posed and answered for the (stationary) Schrödinger equation under a variety of conditions on the potential. We mention to this regard the foundational work of \textcite{li_schrodinger_1983} and \textcite{melrose_1982_scattering_trave_wave}, and the comprehensive book by \textcite{ivrii_1998_microlocal_spectral}, which includes many results due to its author. 
	
	\printbibliography
	
\end{document}